\renewenvironment{proof}{\begin{origproof}}{\qed\end{origproof}}
\newcommand{\refPre}[2]{\hyperref[#1]{#2\ref{#1}}}
		\newcommand{\resType}{\relax}
		\newcommand{\ifResType}[1]{\expandafter\ifstrequal\expandafter{\resType}{#1}}
		\newcommand{\resName}[1]{\StrSplit{#1}{3}{\resType}{\superfluous}\ifResType{cnd} {Condition}
			{\ifResType{sup}
				{Supposition}		
			{\ifResType{thm}
				{Theorem}
			{\ifResType{lem}
				{Lemma}
			{\ifResType{pro}
				{Proposition}
			{\ifResType{cor}
				{Corollary}
			{\ifResType{rem}
				{Remark}
			{\ifResType{ctr}
				{Constraint}
			{\ifResType{exr}
				{Expression}
			{\ifResType{def}
				{Definition}
			{\ifResType{idn}
				{Identity}
			{\ifResType{bnd}
				{Bound}
			{\ifResType{apx}
				{Approximation}
			{\ifResType{pbm}
				{Problem}
			{\ifResType{gam}
				{Game}
			{\ifResType{cha}
				{Chapter}
			{\ifResType{sec}
				{Section}
				{{{}}}}}}}}}}}}}}}}}}}}
\newcommand{\res}[1]{\StrSplit{#1}{3}{\resType}{\superfluous}\resName{\resType}~\ref{#1}}
		\newcommand{\ress}[2]{\StrSplit{#1}{3}{\resType}{\superfluous}\ifResType{cor}{Corollaries}{\makepluralNaive{\resName{\resType}}} \ref{#1} and \ref{#2}}
\newcommand{\numberPre}[1]{\stepcounter{equation}\tag{#1\theequation}}
\newcommand\numberthis{\stepcounter{equation}\tag{\theequation}}
		\newcommand{\advanceenumeratelevel}{\advance\@enumdepth\@ne}
		\let\itemorig\item
		\newcommand{\itemcnd}{\itemorig \stepcounter{cnd}}
		\newenvironment{enumeratecnd}
			{
			\renewcommand{\item}{\itemcnd}
			\begin{enumerate}[label=(C\arabic*)]
			\setcounter{enumi}{\value{cnd}}

			}
			{
			\end{enumerate}
			}
		\newenvironment{enumeratecnd*}
			{\renewcommand{\item}{\itemcnd}\begin{enumerate*}[label=(C\arabic*)]\setcounter{enumi}{\value{cnd}}\setcounter{enumii}{\value{cnd}}}{
			\end{enumerate*}
			}
		\newcommand{\beginparacnd}{\begin{inparaenum}[(C1)]
				\setcounter{enumi}{\value{cnd}}
				}
		\newcommand{\LHS}{left-hand side}
		\newcommand{\RHS}{right-hand side}
	\providecommand{\citet}[1]{\cite{#1}}
	\providecommand{\citep}[1]{(\citet{#1})}
\newcommand{\eg}{for example}
\newcommand{\ie}{that is}
\newcommand{\Aka}{Also known as}
\newcommand{\makepluralNaive}[1]{#1s}
	\newcommand{\makeplural}[1]{\IfEndWith{#1}{y}{ \StrGobbleRight{#1}{1}[\result]\result ies}{#1s}
	}
\newcommand{\propForCor}[1]{The point of \res{pro:#1} is its corollary:}
\DeclarePairedDelimiter{\operatorarg}{(}{)}
\newcommand\DeclareMathOperatorNoLimits[1]{\expandafter\newcommand\csname #1\endcsname[1]{\operatorname{#1}\operatorarg*{##1}}}
\newcommand\ProvideMathOperator[2]{\ifdefined#1\else\DeclareMathOperator{#1}{#2}\fi}
\ProvideMathOperator{\Ran}{Ran}
\ProvideMathOperator{\tr}{tr}
\ProvideMathOperator{\proj}{proj}
\ProvideMathOperator{\argmin}{argmin}
\ProvideMathOperator{\cum}{C}
\ProvideMathOperator{\sgn}{sign}
\ProvideMathOperator{\lcm}{LCM}
\newcommand{\limitsZero}{_{i=0}^N}
\newcommand{\limitsOne}{_{i=1}^N}
\newcommand{\setsub}{\backslash}
\newcommand{\Ints}{\mathbb{Z}}
\newcommand{\IntsNonneg}{\Ints^+}
\newcommand{\IntsPos}{\Ints^{>0}}
\newcommand{\lz}[1]{\ell^0\left(#1\right)} \newcommand{\nz}[1]{\left|#1\right|_0}
\newcommand{\placeholderDefn}{More generally, a dot is used as the placeholder for a function argument.}
\newtheorem{thm}{Theorem}
\newtheorem{lem}{Lemma}
\newtheorem{pro}{Proposition}
\newtheorem{defn}{Definition}
\newcounter{cnd}
\newcounter{cnt:rem}
\newcommand{\remStar}{\paragraph{\textbf{Remark}:}}
\begin{document}

\title{Prime-Residue-Class of Uniform Charges on the Integers\thanks{We thank David Donoho for posing the question of whether $PR = R$. The Department of Defense (DoD) supported the first author through the National Defense Science \& Engineering Graduate Fellowship (NDSEG) Program.}}

\titlerunning{Prime-Residue-Class of Uniform Charges}        

\author{Michael Spece       \and
        Joseph B. Kadane }

\institute{M. Spece\at
              Department of Statistics \& Data Science and\\
              Machine Learning Department\\
              Carnegie Mellon University\\
              Pittsburgh, PA 15213\\
              Tel.: 412-268-2717\\
              Fax: 412-268-7828\\
              \email{spece@cmu.edu}           \and
           J.B. Kadane \at
           Department of Statistics \& Data Science \\
              Carnegie Mellon University\\
              Pittsburgh, PA 15213\\
              Tel.: 412-268-8726\\
              Fax: 412-268-7828\\
              \email{kadane@stat.cmu.edu}           }

\date{Received: date / Accepted: date}

\maketitle

\begin{abstract}
There is a probability charge on the power set of the integers that gives probability $1/p$ to every residue class modulo a prime $p$.  There exists such a charge that gives probability $w$ to the set of prime numbers iff $w \in [0,1/2]$. Similarly, there is such a charge that gives probability $x$ to a residue class modulo $c$, where $c$ is composite, iff $x \in [0,1/y]$, where $y$ is the largest prime factor of $c$.\\
\keywords{probability charge \and finite additivity \and uniform distribution \and residue class \and prime numbers}
\subclass{MSC 60A05}
\end{abstract}

\section{Distributions uniform on the integers}
\label{distribution}
A \textbf{probability charge}\footnote{{\Aka} a finitely additive probability or additive distribution, but these terms conflate with terminology for standard probability.}  uniform on the integers assigns $0$ to each integer, but that of $1$ to $\mathbb{Z}$, precluding countable additivity.  Every finite set has probability $0$, and each cofinite set $1$, leaving undetermined jointly infinite-coinfinite sets.

Suppose $\mathcal{C}$ is a collection of subsets from $\Omega$ (here $\mathbb{Z}$) such that $\Omega \in \mathcal{C}$. Let $\mu$ be a non-negative function on $\mathcal{C}$ such that $\mu(\Omega) = 1$.  Theorem 1 of \citet{kad} gives a necessary and sufficient condition that $\mu$ can be extended to a finitely additive probability on the power set of $\Omega$. Applying this result, they show the special case in which $\mathcal{C}$ is the class of sets that have natural\footnote{Also, ``asymptotic" and ``arithmetic".} densities\footnote{Also known as limit (or limiting) relative frequencies.} admits such an extension, where $\mu$ is taken to assign a set its natural density, when that exists.

\citet{sch} study three classes of finitely additive probabilities: The class $L$ extending limit relative frequency, the class $S$ of shift\footnote{Also, ``translation".} invariant\footnote{Equivalently, thinnable with respect to (i) affine transformations; or (ii) $2\times$ or (iii) general-scale invariance.  See Theorem 1.11 of \citet{van}.} functions $\mu$, and the class $R$ assigning probability $1/m$ to each residue class $\bmod$ $m$ for all positive integers $m$. They show that

\begin{equation*}
L \subset S \subset R
\end{equation*}
where each of the inclusions above are strict.

\citet{ker} study the class $WT$ of weakly thinnable probabilities and show it is strictly less inclusive than $L$.

Each of the previously studied classes\footnote{There are others, for example those based on ``thinning out" sets in \citet{van}.}  has an intuitive interpretation of uniformity that goes beyond assigning each integer $0$.  The inclusions indicate that these various notions of uniformity are strictly nested, with $R$ comprising the weakest notion.

\section{The Prime Residue Class}

One may consider a potentially weaker notion of uniformity than that of $R$ by specifying the probability of each residue class $\bmod$ $m$ for $m$ in a strict subset of $\mathbb{Z}$.  The primes are a natural choice for this subset.  Therefore, consider the class $PR$ of finitely additive probabilities on the integers that give probability $1/m$ to each residue class mod $m$ where $m$ is a prime number.  Since this condition is weaker than that defining the class $R$, we must have

\begin{equation*}
WT \subset L \subset S \subset R \subseteq PR
\end{equation*}

Given that every integer has a prime factorization, $R = PR$ is conceivable.  To the contrary, we show that $R \subset PR$.

Since $PR$ is the least demanding of these classes, it is important to establish at the outset that each member of $PR$ is uniform on the integers. That is accomplished with the following easy result.  First, a notation for sets of ``natural" numbers:

\newcommand{\indMem}{i \in \mathcal{\MakeUppercase{i}}}
\newcommand{\whichClassesInds}{$(j_i)_{\indMem} \in \times_{\indMem} \mathcal{J}_i$}
\newcommand{\Primes}{\ensuremath{\mathbb{N}_p}}
\begin{defn}[Natural Numbers]
	Denote the nonnegative integers by $\mathbb{Z}^+$ and the positive ones by $\mathbb{Z}^{>0}$.
	
	Also, let {\Primes} denote the primes, $\Ints$ integers, $\mathbb{Q}$ rationals, and $\mathbb{R}$ reals.
\end{defn}

\begin{pro}[Uniformity]
	\label{new pro}
	Under $PR$, each integer has probability $0$.
\end{pro}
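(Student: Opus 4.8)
The plan is to sandwich the probability of any singleton between $0$ and $1/p$ for every prime $p$, and then let $p$ grow without bound. Fix an arbitrary $\mu \in PR$ and an arbitrary integer $n \in \Ints$. For each prime $p \in \Primes$, the singleton $\{n\}$ is a subset of the residue class $\{k \in \Ints : k \equiv n \pmod{p}\}$, and by the defining property of $PR$ this residue class has probability exactly $1/p$ under $\mu$.

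Next I would invoke monotonicity. Because $\mu$ is a finitely additive probability, for any $A \subseteq B$ we have $\mu(B) = \mu(A) + \mu(B \setminus A) \geq \mu(A)$, the inequality holding since $\mu$ is nonnegative. Applying this with $A = \{n\}$ and $B$ the residue class of $n$ modulo $p$ yields $\mu(\{n\}) \leq 1/p$. This bound holds simultaneously for every prime $p$.

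Finally, since there are infinitely many primes, $\inf_{p \in \Primes} 1/p = 0$, so $\mu(\{n\}) \leq 0$; together with nonnegativity of $\mu$ this forces $\mu(\{n\}) = 0$. As both $\mu \in PR$ and $n \in \Ints$ were arbitrary, every integer has probability $0$ under $PR$, as claimed.

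The only step requiring any care is the correct use of monotonicity (which rests on nonnegativity and finite additivity) and the observation that the primes are unbounded; there is no substantive obstacle here, consistent with the authors' description of this as an easy result whose purpose is merely to confirm that every member of $PR$ deserves to be called uniform.
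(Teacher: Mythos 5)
Your proof is correct and follows essentially the same route as the paper's: bound $\mu(\{n\})$ by the probability $1/p$ of the residue class of $n$ modulo a prime $p$, then use the unboundedness of the primes to force the bound to $0$. The only cosmetic difference is that you take an infimum over all primes while the paper fixes $\epsilon > 0$ and chooses one prime $p > 1/\epsilon$; you also spell out the monotonicity step that the paper leaves implicit.
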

\begin{proof}
	Let $z \in \Ints$ and $\epsilon > 0$ be given, and let $m$ be a prime greater than $1/\epsilon$. Now $z \in j \bmod m$ for some $j, 0 \leq j \leq m-1$. Then
	\[P\{z\} \leq P\{j \bmod m\} = 1/m < \epsilon.
	\]
	Hence,  $P\{z\} = 0$ for all $z \in \Ints$.
\end{proof}

The remainder of this paper is organized as follows: \res{sec:suprema-of-probabilities} gives general upper bounds on the probability of sets. \ress{sec:application-to-primes}{sec:app:res} respectively apply these results to $\mathbb{N}_p$ and residue classes, proving the claims thereon.   \res{sec:conclusion} concludes.

\section{Suprema of Probabilities}
\label{sec:suprema-of-probabilities}
\newcommand{\prob}{probability of the prime}
For every class of uniform distributions and every subset $S$ of the space measured under these distributions, the probability range of $S$ is a closed interval (Theorem 2 of \citet{kad}).  Restricting to the class $PR$ and the set $\mathbb{N}_p$ of prime numbers, a greatest lower bound of $0$, quoted below for future reference, is immediate from the inclusion $L \subset PR$.
\begin{pro}[Greatest Lower Bound for Primes]
	\label{pro:lower:primes}
\[	\inf_{\mu \in PR} \mu(\mathbb{N}_p) = 0. 
\]
\end{pro}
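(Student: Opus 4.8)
The plan is to prove the two halves of the infimum claim separately: that $0$ bounds the set $\{\mu(\mathbb{N}_p) : \mu \in PR\}$ from below, and that this bound is attained. The lower bound is immediate from non-negativity: every $\mu \in PR$ is a probability charge, so $\mu(\mathbb{N}_p) \geq 0$, whence $\inf_{\mu \in PR}\mu(\mathbb{N}_p) \geq 0$. The substance, such as it is, lies in exhibiting a single $\mu \in PR$ with $\mu(\mathbb{N}_p) = 0$, which pins the infimum at $0$.

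For the attainment, I would lean on the inclusion $L \subset PR$ recorded in Section~\ref{distribution}. A member $\mu$ of $L$ extends limit relative frequency, so it must assign any set its natural density whenever that density exists. The one number-theoretic fact needed is that the primes are a density-zero set: writing $\pi(n)$ for the number of primes at most $n$, the Prime Number Theorem gives $\pi(n)/n \to 0$, so $\mathbb{N}_p$ has natural density $0$. (The full strength of the Prime Number Theorem is not required; the elementary estimate $\pi(n) = o(n)$ already suffices.) Hence every $\mu \in L$ satisfies $\mu(\mathbb{N}_p) = 0$, and since $L$ is non-empty---by the extension results cited in Section~\ref{distribution}, the density class admits a finitely additive extension to the full power set---such a $\mu$ exists inside $PR$.

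Combining the two halves yields $\inf_{\mu \in PR}\mu(\mathbb{N}_p) = 0$, with the value in fact attained. I do not expect a genuine obstacle: the result is as immediate as the paper advertises. The only steps meriting attention are the correct citation of the inclusion $L \subset PR$ together with the non-emptiness of $L$, and the observation that $\mathbb{N}_p$ has natural density zero; everything else is the triviality $\mu(\mathbb{N}_p) \geq 0$.
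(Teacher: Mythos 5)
Your proposal is correct and matches the paper's own argument: the paper proves this proposition in one line by citing the inclusion $L \subset PR$, which is exactly your attainment step (a charge extending natural density assigns the density-zero set $\mathbb{N}_p$ probability $0$), combined with the trivial non-negativity bound. You simply spell out the details---non-emptiness of $L$ via the extension theorem of \citet{kad} and $\pi(n) = o(n)$---that the paper leaves implicit.
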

Thus, to prove the {\prob} numbers can be any value in the interval $[0, 1/2]$, it suffices to show the least upper bound of the \prob s is $1/2$.

At the heart of affording measure to a set is the following theorem about general sets.  

\newcommand{\domInit}{Suppose $\mathcal{C}$ is a collection of subsets of $2^\Omega$}
\newcommand{\constraintsKad}{h\in \mathbb{Z}^{>0};a,b \in \mathbb{Z}^+; A_i, B_j \in \mathcal{C}}
\newcommand{\refConds}[2]{Conditions \refPre{#1}{C} and \refPre{#2}{C}}
\newcommand{\whichClasses}{For \whichClassesInds, $\cap_{\indMem} j_i \bmod p_i $ contains infinitely many }
\newcommand{\propIsctShft}{\frac{\card{\overline{\mathcal{J}}_i}}{p_i}}

\newcommand{\rel}{\geq}\newcommand{\relPrime}{\leq}\newcommand{\ext}{\sup}\newcommand{\extPrime}{\inf}\newcommand{\probset}{S}
\newcommand{\mSpace}{\Omega}
\newcommand{\mBasis}{\mathcal{C} \subseteq \sigalg}
\begin{thm}[Probability Range]
\label{thm:kad}
Suppose $\mathcal{C}$ is a subset of the power-set $2^\mSpace$, that $\Omega \in \mathcal{C}$, and finally that $\mu_0$ is a function on $\mathcal{C}$ that can be extended to a finitely additive probability on $2^\Omega$.  Let $\mathcal{M}$ be the family of such extensions.  Then for every $\probset \in 2^\Omega$,
\[\ext_{\mu \in \mathcal{M}} \mu(\probset) = \extPrime \left\{\frac{1}{h}\left(\sum_{i=1}^a \mu_0(A_i) - \sum_{j=1}^b \mu_0(B_j)\right)\right\}
\numberPre{Id}\label{idn:kad}
,\]
in which the $\extPrime$ is taken over $\constraintsKad$ such that
\[
\sum_{i=1}^a I_{A_i} - \sum_{j=1}^b I_{B_j} \rel h I_\probset \numberPre{Ct} \label{ctr:kad}
.\]
\end{thm}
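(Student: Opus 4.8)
The plan is to establish the identity by proving two matching inequalities, $\sup_{\mu\in\mathcal{M}}\mu(S)\le\inf\{\cdots\}$ and $\sup_{\mu\in\mathcal{M}}\mu(S)\ge\inf\{\cdots\}$. The first is a routine ``weak duality'' computation; the second, the substantive half, will come from exhibiting a single extension that attains the infimum, and I expect the verification that such an extension exists to be the main obstacle.

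For the upper bound I would fix an arbitrary extension $\mu\in\mathcal{M}$ together with any admissible tuple $h\in\mathbb{Z}^{>0}$, $a,b\in\mathbb{Z}^+$, $A_i,B_j\in\mathcal{C}$ satisfying $\sum_{i=1}^a I_{A_i}-\sum_{j=1}^b I_{B_j}\ge h\,I_S$ pointwise. Both sides are simple functions, so integrating against $\mu$ and using linearity and monotonicity of the finitely additive integral gives $\sum_{i=1}^a\mu(A_i)-\sum_{j=1}^b\mu(B_j)\ge h\,\mu(S)$. Because $\mu$ restricts to $\mu_0$ on $\mathcal{C}$, the left-hand side equals $\sum_i\mu_0(A_i)-\sum_j\mu_0(B_j)$; dividing by $h$ and then taking the supremum over $\mu\in\mathcal{M}$ and the infimum over admissible tuples yields this inequality.

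For the reverse inequality, write $v$ for the right-hand infimum. I would enlarge the domain to $\mathcal{C}'=\mathcal{C}\cup\{S\}$, extend $\mu_0$ to $\mu_0'$ by declaring $\mu_0'(S)=v$, and argue that $\mu_0'$ is itself extendible to a finitely additive probability on $2^\Omega$; any such extension $\mu$ then has $\mu(S)=v$ and hence certifies $\sup_{\mu}\mu(S)\ge v$. To see that $\mu_0'$ extends, I would invoke the coherence criterion of Theorem~1 of \citet{kad}, checking that every pointwise inequality between non-negative integer combinations of indicators of sets drawn from $\mathcal{C}'$ is matched by the corresponding inequality on $\mu_0'$-values. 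Inequalities not mentioning $S$ are covered by the assumed extendibility of $\mu_0$. When $S$ occurs (after cancellation) on the smaller side, the inequality is exactly of the form defining $v$, so the needed relation is immediate from $v$ being an infimum of those very ratios. The delicate case---and the step I expect to require the most care---is when $S$ occurs on the larger side: rearranging then produces a lower bound on $\mu(S)$ valid for every extension, and I would close the argument by observing that the upper-bound half already forces each such lower bound to be at most $v$ (here using that $\mathcal{M}\neq\emptyset$, which holds by hypothesis since $\mu_0$ is extendible). This self-contained route parallels, and could alternatively be replaced by a direct appeal to, the range computation in Theorem~2 of \citet{kad}.
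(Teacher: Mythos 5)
Your proposal is correct, but it takes a genuinely different route from the paper: the paper's entire proof of this theorem is a deferral (``The proof is that of Theorem 2 in \citet{kad}''), whereas you reconstruct the duality argument from scratch. Your reconstruction is sound: weak duality by integrating the pointwise constraint against an arbitrary extension, and the substantive half by adjoining $S$ to $\mathcal{C}$ with assigned value $v$ and verifying the coherence criterion of Theorem 1 of \citet{kad}, with the correct three-way case split after cancelling occurrences of $S$ common to both sides. In particular your handling of the delicate case---$S$ on the larger side of a pointwise inequality---is right: any single $\mu \in \mathcal{M}$ gives the lower bound, and weak duality caps it by $v$, so nonemptiness of $\mathcal{M}$ is exactly what is needed. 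Two points you leave implicit but should state for Theorem 1 of \citet{kad} to apply to $\mu_0'$: (i) $v \geq 0$, which follows since $v \geq \sup_{\mu \in \mathcal{M}} \mu(S) \geq 0$ by your first half together with $\mathcal{M} \neq \emptyset$; and (ii) if $S$ happens to lie in $\mathcal{C}$ already, the declaration $\mu_0'(S) := v$ is consistent, since both halves of your argument force $v = \mu_0(S)$ in that case. As for what each approach buys: the paper's citation keeps the exposition short and signals that the theorem is not claimed as new, while your argument makes the result self-contained (modulo the extension criterion, which the paper invokes elsewhere anyway) and actually proves slightly more than the stated identity---namely that the supremum is attained by some extension, which is the closedness of the probability range that the paper quotes separately at the start of Section 3.
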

\begin{proof}
The proof is that of Theorem 2 in \citet{kad}.
\end{proof}

\bigskip
\bigskip
\noindent
\textit{\textbf{Remark}} (Notation):
\begin{itemize}
	\item $\probset$ typically denotes a set whose probability is of interest.
	\item The prefix of a number is an abbreviation.  For example, \ref{ctr:kad} is short for Constraint \arabic{equation}, and is referred to as \res{ctr:kad} in prose.  ``A" stands for approximation, the meaning of which is to be interpreted liberally.
\end{itemize}

\newcommand{\binop}{+}\newcommand{\divd}{\textnormal{ divides }}
\newcommand{\ndiv}{\textnormal{ does not divide }} \begin{defn}[Relations]
For all sets $A$ and $B$, define $A \setsub B:= \{a \in A : a \notin B\}$ and, when $+$ is defined on $A \times B$, $A \binop B := \{ a \binop b :a \in A, b \in B\}$.

The elements of a residue class are \textbf{shifts} or, less descriptively, \textbf{representatives}.  For all $A \subseteq \mathbb{Z}$ and $m \in \IntsPos$, $A \bmod m := \cup_{a \in A} a \bmod m$.

For all $a,b \in \mathbb{Z}^{>0}$, $a | b$ denotes that $a$ divides $b$.  For every pair of $\mathbb{R}$-valued functions $f$ and $g$ with common domain $X$, $f \leq g$ is to be understood in the domination or Pareto sense, {\ie} for all $x \in X$, $f(x) \leq g(x)$.
\end{defn}

\begin{defn}[Classes of Functions]
For a function $f$ with co-domain $Y \subseteq \mathbb{Q}$, let $\nz{f}$ be the counting measure of $f^{-1}(Y \setsub \{0\})$, that is the number of elements in the domain of $f$ that are not mapped to zero; and for a collection of functions $\mathcal{F}$, $\lz{\mathcal{F}}:=\{f\in\mathcal{F}:\nz{f} \ne \infty\}$.  We often specify a collection of functions merely by their domain and co-domain, as follows:

For all sets $A,B$, let $A^B$ denote the collection of functions mapping from $B$ into $A$.
\end{defn}

The difference of sums, in the form that appear in \res{idn:kad} and \res{ctr:kad}, can be re-written as a single sum, as follows.

\begin{lem}[Concise Rewrite]
\label{lem:canon}
Let $\mathcal{C}$ be a collection of sets and $\mathcal{F} := \lz{\mathbb{Q}^{\mathcal{C}}}$.  For all $\constraintsKad$, there exists a unique $f \in \mathcal{F}$ such that, for all $g \in \mathbb{R}^\mathcal{C}$,
\[
\frac{1}{h}\left(\sum_{i=1}^a g(A_i) - \sum_{j=1}^b g(B_j)\right)
 =
\sum_{C \in \mathcal{C}} f(C) g(C)
\numberPre{Id}\label{idn:canon}
.\]
Conversely, if $f \in \mathcal{F}$ and $g \in \mathbb{R}^\mathcal{C}$, then there exists $\constraintsKad$ such that \res{idn:canon} holds.
\end{lem}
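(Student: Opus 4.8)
The plan is to read \res{idn:canon} as a bookkeeping identity. The left-hand side assigns, to each member of $\mathcal{C}$, a net signed count of how often it occurs among the $A_i$ and $B_j$ (scaled by $1/h$), and the right-hand side records exactly those counts as the values of $f$. Concretely, for the forward direction I would fix $h, a, b$ and the sets $A_i, B_j \in \mathcal{C}$ and define, for each $C \in \mathcal{C}$,
\[
f(C) := \frac{1}{h}\Bigl(\#\{i : A_i = C\} - \#\{j : B_j = C\}\Bigr).
\]
This $f$ takes values in $\frac{1}{h}\mathbb{Z} \subseteq \mathbb{Q}$ and vanishes off the finite set $\{A_1,\dots,A_a\}\cup\{B_1,\dots,B_b\}$, so $f \in \mathcal{F}$.

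Then I would verify the identity by grouping the terms of the two sums $\sum_i g(A_i)$ and $\sum_j g(B_j)$ according to the common value $C$ they take in $\mathcal{C}$: each $g(A_i)$ contributes to the coefficient of $g(C)$ exactly when $A_i = C$, and likewise for the $B_j$, so collecting like terms turns the difference of sums into $\sum_{C}f(C)g(C)$. Because $f$ is finitely supported, this last sum has only finitely many nonzero terms and is well-defined for every $g \in \mathbb{R}^\mathcal{C}$, even when $\mathcal{C}$ is infinite. For uniqueness, I would note that if $f, f' \in \mathcal{F}$ both satisfy \res{idn:canon} for all $g$, then $\sum_C (f(C)-f'(C))g(C)=0$ for all $g$; testing against the indicator $g = \delta_D$ (the function equal to $1$ at a chosen $D \in \mathcal{C}$ and $0$ elsewhere) forces $f(D)=f'(D)$, and since $D$ was arbitrary, $f = f'$.

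For the converse, I would start from an arbitrary $f \in \mathcal{F}$ with finite support $\{C_1,\dots,C_n\}$ and clear denominators simultaneously: taking $h$ to be a common multiple of the denominators of the rationals $f(C_1),\dots,f(C_n)$ (\eg their least common multiple, or $1$ when $f \equiv 0$) makes each $m_k := h\,f(C_k)$ an integer. I would then build the two lists by multiplicity and sign: list $C_k$ among the $A_i$ with multiplicity $m_k$ when $m_k > 0$, and among the $B_j$ with multiplicity $-m_k$ when $m_k < 0$, which determines $a, b \in \mathbb{Z}^+$. Substituting back reproduces $\frac{1}{h}\sum_k m_k g(C_k) = \sum_{C}f(C)g(C)$, so \res{idn:canon} holds for the given $g$ (indeed for every $g$).

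No step is deep; the work is all bookkeeping. The points that demand care, and which I would treat as the main obstacle, are the structural ones rather than any estimate: ensuring $f$ genuinely lies in $\mathcal{F}$ (finite support and rational values) so that the sum on the right of \res{idn:canon} is meaningful for arbitrary $g$, handling the degenerate cases where $a$ or $b$ is $0$ (permitted since $\mathbb{Z}^+$ includes $0$, with empty sums read as $0$), and checking that the multiplicity-and-sign construction in the converse is consistent with the single common denominator $h$.
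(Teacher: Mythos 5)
Your proposal is correct and follows essentially the same route as the paper's proof: define $f(C)$ as the signed occurrence count $\frac{1}{h}\left(\#\{i: A_i = C\} - \#\{j : B_j = C\}\right)$, verify \res{idn:canon} by collecting like terms, obtain uniqueness from the freedom in choosing $g$, and prove the converse by clearing denominators and listing each $C$ among the $A_i$ or $B_j$ according to the sign and magnitude of $hf(C)$. If anything, your write-up is slightly more careful than the paper's on two points: you make the uniqueness argument explicit by testing against indicator functions $\delta_D$, and your converse handles the sign correctly (listing $C_k$ among the $B_j$ with multiplicity $-m_k$), where the paper's formula $\card{j:B_j = C} := hf(C)I_{f(C) < 0}$ has a sign slip.
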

\begin{proof}
Suppose $\constraintsKad$.

Uniqueness follows from the flexibility in choosing $g$: for all $C \in \mathcal{C}$,
\begin{align*}
f(C) = \frac{1}{h}\left(\sum_{i=1}^a I_{A_i = C} - \sum_{j=1}^b I_{B_j = C}\right)
\numberPre{Id}\label{idn:coefficient}
.\end{align*}

The question becomes whether, for all $g$,
\begin{align*}
\sum_{C \in \mathcal{C}} \left(\sum_{i=1}^a I_{A_i = C} - \sum_{j=1}^b I_{B_j = C}\right) g(C)
&=
\left(\sum_{i=1}^a g(A_i) - \sum_{j=1}^b g(B_j)\right)
,\end{align*}
for which the following system (if true) suffices
\begin{align*}
\sum_{C \in \mathcal{C}} \sum_{i=1}^a I_{A_i = C} g(C) = \sum_{i=1}^a g(A_i)
\\
\sum_{C \in \mathcal{C}} \sum_{j=1}^b I_{B_j = C}g(C)
=\sum_{j=1}^b g(B_j)
.\end{align*}
But indeed
\begin{align*}
\sum_{i=1}^a g(A_i) = \sum_{i=1}^a \sum_{C \in \mathcal{C}} I_{A_i = C}g(C) = \sum_{C \in \mathcal{C}}\sum_{i=1}^a I_{A_i = C}g(C)
\end{align*}
and symmetrically for $\sum_{j=1}^b g(B_j)$.

Turning to the converse, invert \res{idn:coefficient}.  Since therein only $A_i$ and $B_j$ that equal $C$ count, $\card{i:A_i = C}$ and $\card{j:B_j = C}$ can be set without regard to $a$, $b$, $(A_i)_{i:A_i \ne C}$, or $(B_j)_{j:A_j \ne C}$.  Set $h$ to a common denominator of $\{f(C):C \in \mathcal{C}\}$ so that $hf(C)$ is always an integer.  Set $\card{i:A_i = C} := hf(C)I_{f(C) > 0}$ and $\card{j:B_j = C} := hf(C)I_{f(C) < 0}$.
\end{proof}

Rewriting the sums appearing in \res{thm:kad} according to \res{lem:canon}, one obtains the following theorem restatement.
\begin{thm}[Concise Form of \res{thm:kad}]
\label{thm:range:lz}
Suppose $\mathcal{C} \subseteq 2^\mSpace$, that $\Omega \in \mathcal{C}$, and finally that $\mu_0$ is a function on $\mathcal{C}$ that can be extended to a finitely additive probability on $2^\Omega$.  Let $\mathcal{M}$ be the family of such extensions.   Then for every $\probset \in 2^\Omega$, \[\ext_{\mu \in \mathcal{M}} \mu(\probset) = \extPrime_{\alpha \in \lz{\mathbb{Q}^\mathcal{C}}} \sum_{C \in \mathcal{C}} \alpha(C) \mu_0(C)
\numberPre{Id}\label{idn:optimization:canon}
,\]
in which $\alpha$ is additionally subject to 
\[
\sum_{C \in \mathcal{C}} \alpha(C) I_C \rel I_\probset \numberPre{Ct} \label{ctr:s}
.\]
\end{thm}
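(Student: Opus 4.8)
The plan is to read \res{thm:range:lz} as \res{thm:kad} after the change of variable furnished by \res{lem:canon}: each admissible datum $\constraintsKad$ is replaced by the single function $\alpha := f \in \lz{\mathbb{Q}^{\mathcal{C}}}$ that \res{lem:canon} assigns to it, and conversely every such $\alpha$ is realized by some datum. All that must be verified is that this substitution carries the objective of \res{idn:kad} onto that of \res{idn:optimization:canon} and the constraint \res{ctr:kad} onto \res{ctr:s}, since then the two infima are taken over exactly the same collection of attainable values and must agree. I note at the outset that the construction witnessing the converse half of \res{lem:canon} does not depend on the test function $g$, so for a fixed datum the identity \res{idn:canon} holds simultaneously for all $g \in \mathbb{R}^{\mathcal{C}}$; this is what permits one $\alpha$ to encode an entire datum.

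The objective is immediate: instantiating \res{idn:canon} at $g = \mu_0$ turns $\tfrac{1}{h}\bigl(\sum_{i=1}^a \mu_0(A_i) - \sum_{j=1}^b \mu_0(B_j)\bigr)$ into $\sum_{C \in \mathcal{C}} \alpha(C)\mu_0(C)$ verbatim.

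The constraint is the only step needing an idea, because \res{ctr:kad} is a pointwise inequality between indicator functions on $\Omega$ while \res{idn:canon} concerns functions on $\mathcal{C}$. To bridge the two I would, for each fixed $\omega \in \Omega$, apply \res{lem:canon} to the evaluation function $g_\omega \in \mathbb{R}^{\mathcal{C}}$ given by $g_\omega(C) := I_C(\omega)$. This yields $\tfrac{1}{h}\bigl(\sum_{i=1}^a I_{A_i}(\omega) - \sum_{j=1}^b I_{B_j}(\omega)\bigr) = \sum_{C \in \mathcal{C}} \alpha(C) I_C(\omega)$ for every $\omega$, hence the identity of functions $\tfrac{1}{h}\bigl(\sum_{i=1}^a I_{A_i} - \sum_{j=1}^b I_{B_j}\bigr) = \sum_{C \in \mathcal{C}} \alpha(C) I_C$ on $\Omega$. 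Dividing \res{ctr:kad} by the positive integer $h$ then shows it to be equivalent to $\sum_{C \in \mathcal{C}} \alpha(C) I_C \geq I_\probset$, which is precisely \res{ctr:s}. Thus feasible data map to feasible $\alpha$ with equal objective value, and by the converse of \res{lem:canon} every feasible $\alpha$ descends from a feasible datum; substituting the resulting equality of infima into the \RHS\ of \res{idn:kad} delivers \res{idn:optimization:canon}. The sole point demanding care is this pointwise passage — confirming that each $g_\omega$ is a legitimate argument of \res{lem:canon} (it is, as $g_\omega \in \mathbb{R}^{\mathcal{C}}$) and that passing the function identity through division by $h>0$ preserves the sense of the inequality.
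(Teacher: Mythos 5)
Your proposal is correct and follows essentially the same route as the paper's proof, which likewise applies \res{lem:canon} simultaneously to $\mu_0$ and to the indicators (your pointwise functions $g_\omega$ make precise what the paper compresses into ``applying \res{lem:canon} to $I$'') and invokes the converse of \res{lem:canon} to confirm the constraint set has not expanded. Your explicit observation that the converse construction is independent of $g$, so that one datum serves the objective and the constraint at once, is exactly the detail the paper leaves implicit.
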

\begin{proof}
Applying \res{lem:canon} simultaneously to $\mu_0,I \in \mathbb{R}^\mathcal{C}$ in  \res{idn:kad} and \res{ctr:kad}, respectively, yields the desired form of the objective function in the {\RHS} of \res{idn:optimization:canon} and in \res{ctr:s}.  The converse in \res{lem:canon} ensures the constraint set has not expanded.

\end{proof}

\remStar  In contrast to \res{thm:kad}, each $C \in \mathcal{C}$ appears in at most one term in each of the sums appearing on the right (resp. left) hand side of \res{idn:optimization:canon} and \res{ctr:s}; therein, also, $I_S$ has no coefficient.  Both of these simplifications, especially the first, make bookkeeping easier when applying the theorem to $PR$.

\newcommand{\resPos}[1]{k \in j#1 \bmod p_i \cap \IntsPos}
\begin{defn}[Subsets of Primes]
	\label{def:primes}
	Let $p_i$ be the $i$th largest prime ($i \in \IntsPos$), $p_0 := 1$, and $\{1, \cdots, p_i\}$ or $\{1 : p_i\}$ the set of positive integers less than $p_i + 1$.
	
	For all $N \in \IntsNonneg$, let $N!_p := \prod\limitsOne p_i$ and, for all $j \in \times\limitsOne \{1, \cdots, p_i\}$, $s(j)$ the shift of $\cap\limitsOne j_i \bmod p_i$ in $\{1, \cdots, N!_p\}$ (which exists and is unique by the Chinese Remainder Theorem).  Note $s$ depends on $N$ implicitly.
\end{defn}

\begin{defn}[$\cdot$]
	Let $\ell_{\mathbb{Q}} := \lz{ \mathbb{Q}^{\IntsNonneg \times \IntsPos} } $.
For all $(\alpha_{i,j})_{i,j} \in \ell_\mathbb{Q}$, let $\alpha_{i,\cdot}$ denote the vector $\alpha_{i,1}, \alpha_{i,2}, \cdots$.  \placeholderDefn \end{defn}

\res{thm:range:lz}---with $\mathcal{M} := PR$, $\mathcal{C}$ the collection of prime residue classes, and $\mu_0(\cdot \bmod p_i) \equiv \frac{1}{p_i}$ for all $i \in \IntsNonneg$---gives
\begin{align*} 
\ext_{\mu \in PR} \mu(\probset) = \extPrime_{\alpha \in \ell_\mathbb{Q}} f(\alpha)
\numberPre{Id}\label{idn:objective:start}
,\end{align*}
in which $f(\alpha) := \sum_{i=0}^\infty \sum_{j=1}^{p_i}  \frac{\alpha_{i,j}}{p_i}$ and $\alpha$ is additionally subject to 
\[
G(\alpha) \rel I_\probset
\numberPre{Ct}\label{ctr:start}
,\]
where $G(\alpha) := \sum_{i=0}^\infty \sum_{j=1}^{p_i} \alpha_{i,j}I_{j \bmod p_i}$.

The following proposition provides a convenient avenue through which to make explicit the dependence of $\alpha \in \ell_\mathbb{Q}$ (as appears in \res{idn:objective:start}) on only a finite number of components.

\begin{pro}[Continuity]
	\label{pro:continuity}
	Suppose \begin{enumerate*}[label=(S\arabic*)] \item $f:A \to \mathbb{R}$ and \item \label{sup} $\cup_{n = 1}^\infty A_n = A$.  \end{enumerate*}
	Then
	\[
	\lim_{N \to \infty} \extPrime_{\alpha \in \cup_{n=1}^N A_n} f(\alpha) =
	\extPrime_{\alpha \in A} f(\alpha)
	\numberPre{Id}\label{idn:supremum:limit}
	.\]
\end{pro}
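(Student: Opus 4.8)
The plan is to sandwich the limit between two inequalities, using only that the supremum is monotone under set inclusion and that \textup{(S2)} lets every point of $A$ be captured at some finite stage. First I would set $B_N := \cup_{n=1}^N A_n$, so that $B_1 \subseteq B_2 \subseteq \cdots$ is automatically an increasing chain (regardless of how the $A_n$ themselves sit) with $\cup_{N=1}^\infty B_N = A$ by \textup{(S2)}; the sequence whose limit is sought is then $\sup_{\alpha \in B_N} f(\alpha)$. Because $B_N \subseteq B_{N+1}$ this sequence is non-decreasing, and because $B_N \subseteq A$ monotonicity of the supremum gives $\sup_{\alpha \in B_N} f(\alpha) \leq \sup_{\alpha \in A} f(\alpha)$ for every $N$. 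A non-decreasing sequence that is bounded above converges, so the limit $L$ exists and satisfies $L \leq \sup_{\alpha \in A} f(\alpha)$, which is one half of the claim.

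For the reverse inequality $L \geq \sup_{\alpha \in A} f(\alpha)$, I would fix $\epsilon > 0$ and invoke the definition of the supremum to select $\alpha^\ast \in A$ with $f(\alpha^\ast) > \sup_{\alpha \in A} f(\alpha) - \epsilon$ (and, when the supremum is $+\infty$, with $f(\alpha^\ast)$ merely exceeding an arbitrary preassigned threshold). By \textup{(S2)} there is an index $n_0$ with $\alpha^\ast \in A_{n_0} \subseteq B_{n_0}$, so $\sup_{\alpha \in B_N} f(\alpha) \geq f(\alpha^\ast) > \sup_{\alpha \in A} f(\alpha) - \epsilon$ holds for every $N \geq n_0$. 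Passing to the limit in $N$ yields $L \geq \sup_{\alpha \in A} f(\alpha) - \epsilon$, and letting $\epsilon \downarrow 0$ gives $L \geq \sup_{\alpha \in A} f(\alpha)$; the two bounds together force $L = \sup_{\alpha \in A} f(\alpha)$.

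I expect the only friction to lie in the degenerate cases, which I would settle in a sentence apiece rather than let them obscure the one-point capture above. If $A = \emptyset$ then every $A_n$ is empty, both sides equal the supremum over the empty set, namely $-\infty$, and there is nothing to prove; if $A \neq \emptyset$ but finitely many initial $B_N$ happen to be empty, their terms equal $-\infty$ and are harmlessly dominated once $N$ is large enough to contain a point of $A$. The unbounded case $\sup_{\alpha \in A} f(\alpha) = +\infty$ is absorbed by the threshold-exceeding choice of $\alpha^\ast$, which drives $L = +\infty$ as well. With these conventions noted, the monotone convergence of the first paragraph and the one-point capture of the second together make up the whole argument.
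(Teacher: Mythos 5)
Your proof is correct and follows essentially the same route as the paper's: monotonicity of the extremum over the increasing unions $\cup_{n=1}^N A_n \subseteq A$ gives one inequality, and capturing a near-optimal point of $A$ at a finite stage via (S2) gives the reverse. The only differences are cosmetic: the paper states the result for the infimum (your supremum argument transfers by applying it to $-f$) and uses an optimizing sequence together with a subsequence $(N_k)$ in place of your single $\epsilon$-approximate optimizer, while you additionally spell out the degenerate cases ($A = \emptyset$, unbounded extremum) that the paper leaves implicit.
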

\begin{proof}
	$\extPrime_{\alpha \in \cup_{n=1}^N A_n} f(\alpha)$ over the weakening constraint $\alpha \in \cup_{n=1}^N A_n$ is monotonic over $N$.  Hence, the {\LHS} of \res{idn:supremum:limit} is well defined.
	
	Because, for all $N \in \IntsPos$, $\cup_{n=1}^N A_n \subseteq A$;
	\[
	\lim_{N \to \infty} \extPrime_{\alpha \in \cup_{n=1}^N A_n} f(\alpha)
	\rel
	\extPrime_{\alpha \in A} f(\alpha)
	\numberPre{A}\label{apx:weaker}
	,\] so it suffices to prove the reverse inequality of \res{apx:weaker}.
	
	By completeness of the real numbers, there exists $\left(\alpha_n\right)_{n=1}^\infty \in A^\infty$ such that \[\lim_{n \to \infty} f(\alpha_n) = \extPrime_{\alpha \in A} f(\alpha)
	\numberPre{Id}\label{idn:limit}
	.\]
	By \res{sup}, there exists a subsequence $(N_k)_{k=1}^\infty$ of $(N)_{N=1}^\infty$ such that, for all $k \in \IntsPos$, $\alpha_k \in \cup_{n=1}^{N_k} A_n$.  Then
	
	\begin{align*}
	\lim_{N \to \infty} \extPrime_{\alpha \in \cup_{n=1}^N A_n} f(\alpha)
	&=\hspace{.05em}
	\lim_{k \to \infty} \extPrime_{\alpha \in \cup_{n=1}^{N_k} A_n} f(\alpha)
	\numberPre{Id}\label{idn:subsequence}
	.\end{align*}
	Because, for all $k \in \IntsPos$, $\extPrime_{\alpha \in \cup_{n=1}^{N_k} A_n} f(\alpha) \relPrime f(\alpha_k)$;
	the {\RHS} of \res{idn:subsequence} is upper bounded by
	\[
	\lim_{k \to \infty} f(\alpha_k) \numberPre{A}\label{apx:limit}
	.\]
	
	Combining \res{idn:limit} and \res{apx:limit}, the reverse of \res{apx:weaker} holds.
\end{proof}

The theorem following relies on the notion of multi-sets.

\begin{defn}[Multi-sets]
	\label{def:multiset}
	\textbf{Multi-set} is an extension of set, endowing each of its \textbf{elements} with a multiplicity in $\IntsPos \cup \{\infty\}$.  Every set can be viewed as a multi-set each of whose elements has multiplicity 1.  $\mathcal{J}$, or an embellishment thereof, denotes a multi-set.

	The \textbf{cardinality} of a multi-set is the sum of its elements' multiplicities.  Multi-set and set cardinalities coincide on countable sets.
	
Multi-set intersection also extends that for set.  \textbf{Intersections} of multi-sets are taken to preserve the highest multiplicity of an element appearing in any of the multi-sets (excluding a multiplicity of zero).
	
	Multi-set \textbf{inclusion} similarly takes multiplicity into consideration: $\mathcal{J}_0 \subseteq \mathcal{J}$ iff $\mathcal{J}_0 \cap \mathcal{J}$ does not increase the multiplicity of any element.  However, multi-set inclusion in a set is only meant to indicate the elements of that multi-set are in that set.
\end{defn}

\newcommand{\inds}{\mathcal{N}}
\newcommand{\indsDef}{\{1, \cdots, N\}}
\begin{defn}[Coordinates]
\label{def:coordinates}

	Given, either implicitly by its use or explicitly by its declaration, an $N \in \IntsPos \cup \{\infty\}$, let $\inds := \begin{cases} \indsDef & N < \infty \\ \IntsPos & N = \infty\end{cases}$.
	
	For all $n \in \inds$,  finite-cardinality multi-sets $A \subseteq \times_{n \in \inds} \{1, \cdots, p_n\}$, $a \in A$, $i \in \IntsPos$, and $b \in \{1, \cdots, p_i\}$,
	\begin{itemize} 
		\item $a_n$ is the $n$ (of $N$)th coordinate of $a$
		\item $\proj_n (A)$ (projection) is the set of $n$th coordinates from $A$, endowing each coordinate value with a multiplicity equal to the cardinality of $\proj_n^{-1}(a) \cap A$,\footnote{That $A$ is finite-cardinality ensures the endowed multiplicities are finite.} in which the inverse image is taken with respect to regarding $\times_{n \in \inds} \{1, \cdots, p_n\}$ as the domain of $\proj_n$.
		\item $\textnormal{card}_n(A) := \card{\proj_n (A)}$
		\item $a_{-i} := \left(a_1, \cdots, a_{i-1}, a_{i+1}, \cdots, a_N\right)$
		\item $\left(a_{-i}, b\right) := \left(a_1, \cdots, a_{i-1}, b, a_{i+1}, \cdots, a_N\right)$.
	\end{itemize}
	
	Similarly, for all sets $A, B \in \times_{n=1}^N \{1, \cdots, p_n\}$ and $i \in \{1, \cdots, N\}$, $A_{-i} \times B_i := \{(a_{-i}, b_i) : a \in A, b \in B\}$.
\end{defn}

\begin{thm}[Probability Range over $PR$]\label{thm:range:prime}
	For every $\probset \subseteq \Ints$,\[\ext_{\mu \in PR} \mu(\probset) \rel 
	\lim_{N \to \infty}
	\max_{\mathcal{J}}\frac{
		\card{
			\mathcal{J}
			\cap s^{-1}(S \bmod N!_p)}}
		{N!_p}
	\numberPre{A}\label{apx:optimization:primes}
	,\]
	in which 
	\begin{enumeratecnd}
		\item \label{cnd:multiset}
		For all $N \in \IntsNonneg$, the $\mathcal{J}$ of
		\[
		\max_{\mathcal{J}}\frac{
			\card{
				\mathcal{J}
				\cap s^{-1}(S \bmod N!_p)}}
		{N!_p}
		\] within the {\RHS} of \res{apx:optimization:primes} varies over multi-sets such that (i) $\mathcal{J} \subseteq \times\limitsOne \{1, \cdots, p_i\}
		$ and (ii) for all $n \in \inds$, $\proj_n(\mathcal{J}) = \{1^{N!_p/p_n}, \cdots, p_n^{N!_p/p_n}\}$; in which superscripts denote the multiplicity of elements in the multi-set with non-positive subscripts signifying the absence of that element.	
\end{enumeratecnd}
\end{thm}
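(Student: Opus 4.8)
The plan is to read the left side through the dual optimization already established in \res{idn:objective:start}, which states $\ext_{\mu\in PR}\mu(\probset)=\extPrime_{\alpha\in\ell_\mathbb{Q}}f(\alpha)$ subject to \res{ctr:start}, and then to bound this infimum below by the level-wise combinatorial quantity on the right. First I would invoke \res{pro:continuity} with $\extPrime=\inf$ and with $A_n$ the set of $\alpha\in\ell_\mathbb{Q}$ that are feasible for \res{ctr:start} and supported on coordinates $i\le n$: since each feasible $\alpha$ has finite support, $\cup_n A_n$ is the whole feasible region, so $\ext_{\mu\in PR}\mu(\probset)=\lim_{N\to\infty}c_N$, where $c_N:=\extPrime_{\alpha\in A_N}f(\alpha)$ and the sequence $c_N$ is nonincreasing. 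Because the inequality to be proved is one-sided, it then suffices to establish the per-$N$ bound $c_N\rel M_N$, where $M_N:=\max_{\mathcal J}\card{\mathcal J\cap s^{-1}(\probset\bmod N!_p)}/N!_p$ is the level-$N$ maximum appearing in the statement, and afterwards to pass to the limit.

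Second, I would reduce the pointwise constraint \res{ctr:start} to a finite system indexed by residue classes modulo $N!_p$. For $\alpha$ supported on $i\le N$, the value $G(\alpha)(z)=\alpha_{0,1}+\sum_{i=1}^N\alpha_{i,\rho_i(z)}$, with $\rho_i(z)$ the class of $z$ mod $p_i$, depends only on $z\bmod N!_p$. Identifying residues mod $N!_p$ with tuples $j\in\times\limitsOne\{1,\cdots,p_i\}$ through the bijection $s$ of \res{def:primes}, and writing $L(j):=\alpha_{0,1}+\sum_{i=1}^N\alpha_{i,j_i}$, the constraint becomes $L(j)\rel 1$ for every tuple whose class meets $\probset$ and $L(j)\rel 0$ otherwise; the meeting tuples are exactly $s^{-1}(\probset\bmod N!_p)$, so the whole constraint reads $L\rel I_{s^{-1}(\probset\bmod N!_p)}$ on the product. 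This CRT-based translation, and pinning down its correct right-hand side, is the step I expect to demand the most care.

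Third comes the key averaging identity, which is where constraint~(ii) of \res{cnd:multiset} earns its keep. Given an admissible multiset $\mathcal J$, its projection condition forces exactly $N!_p/p_n$ copies of each value in coordinate $n$, so the normalized multiplicities $y_j:=m_{\mathcal J}(j)/N!_p$ (with $m_{\mathcal J}(j)$ the multiplicity of $j$) form a probability vector on the tuples with uniform marginal $1/p_i$ in each coordinate. The crucial computation is that for any such $y$, $\sum_j L(j)\,y_j=\alpha_{0,1}+\sum_{i=1}^N\frac1{p_i}\sum_{b}\alpha_{i,b}=f(\alpha)$, because the uniform marginals reproduce precisely the weights $1/p_i$ defining $f$. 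Weak duality is then immediate: for any feasible $\alpha\in A_N$ and any admissible $\mathcal J$, $f(\alpha)=\sum_j L(j)\,y_j\rel\sum_j I_{s^{-1}(\probset\bmod N!_p)}(j)\,y_j=\card{\mathcal J\cap s^{-1}(\probset\bmod N!_p)}/N!_p$, using $L\rel I_{s^{-1}(\probset\bmod N!_p)}$ and $y_j\rel 0$. Taking the infimum over $\alpha$ and the maximum over $\mathcal J$ gives $c_N\rel M_N$.

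Finally I would pass to the limit: since $c_N$ decreases to $\ext_{\mu\in PR}\mu(\probset)$ and $M_N\relPrime c_N$ for every $N$, we obtain $\ext_{\mu\in PR}\mu(\probset)=\lim_N c_N\rel\limsup_N M_N$, which, once $\lim_N M_N$ is known to exist (as the statement's notation presumes, and as is verified directly in the concrete applications of \ress{sec:application-to-primes}{sec:app:res}), is the asserted inequality; in general the same argument yields the claim with $\limsup$ in place of $\lim$. It is worth emphasizing that only weak duality and the averaging identity are used here: the matching reverse inequality—equality of $c_N$ with the continuous maximum over all uniform-marginal distributions, together with the integrality needed to replace that continuous maximum by the multiset maximum $M_N$—is not required for the present one-sided bound.
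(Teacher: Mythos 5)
Your proposal is correct and takes essentially the same approach as the paper: reduction to finite $N$ via \res{pro:continuity}, the Chinese-Remainder translation of \res{ctr:start} into the binary system of constraints on tuples, an averaging identity exploiting the projection condition (your uniform-marginals computation $\sum_j L(j)\,y_j = f(\alpha)$ is exactly the paper's rearrangement identity \res{idn:sums:alpha}, stated in normalized form), and the resulting per-level weak-duality bound. Your closing caveat about $\limsup$ versus $\lim$ of the level-$N$ maxima is, if anything, slightly more careful than the paper's own passage to the limit.
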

\newcommand{\splitInds}{(j_0,j) \in \times\limitsZero \{1, \cdots, p_i\}}
\newcommand{\stateThinDef}{ \left[\times_{i \in \IntsPos} \mathcal{H}_i\right] \cup \left[\cup_{i_* \in \tilde{\mathcal{K}}} ([\times_{i \in \IntsPos \setsub \{i_*\}} \mathcal{H}_i] \times \mathcal{K}_{i_*})\right]
}
\begin{proof}

	Invoking \res{pro:continuity}, with $A_n := \{\alpha \in \ell_\mathbb{Q} : \nz{\alpha} \leq n\}$, the right-hand side of \res{idn:objective:start}
	\[
	\sup_{\mu \in PR}\mu(S)
	\numberPre{P}\label{pbm:extreme:PR}
	\]
	 becomes
	\begin{align*}
		\lim_{N \to \infty} \extPrime_{\alpha \in \ell_\mathbb{Q} : \nz{\alpha} \leq N} \sum_{i=0}^\infty \sum_{j=1}^{p_i} \frac{\alpha_{i,j}}{p_i}
		=
	\lim_{N \to \infty} \extPrime_{\alpha \in \ell_\mathbb{Q}} \sum_{i=0}^N \sum_{j=1}^{p_i} \frac{\alpha_{i,j}}{p_i}
	\numberPre{Id}\label{idn:finite}
	,\end{align*}
	in which, for all $N \in \IntsNonneg$, the $\alpha$ of 
	\[
	\extPrime_{\alpha \in \ell_\mathbb{Q}} \sum_{i=0}^N \sum_{j=1}^{p_i} \frac{\alpha_{i,j}}{p_i}
	\numberPre{}\label{exr:finite}
	\]
	within the {\RHS} of \res{idn:finite} is additionally subject to 
	\[
	\sum_{i=0}^N \sum_{j=1}^{p_i} \alpha_{i,j}I_{j \bmod p_i} \rel I_\probset
	\numberPre{Ct} \label{ctr:dominate}
	.\]
	By the Chinese Remainder Theorem, every $z \in \Ints$ corresponds to a \[j_{z, \cdot} \in \times\limitsZero \{1, \cdots, p_i\} \] such that $z \bmod N!_p = s\left( j_{z,-0} \right) \bmod N!_p$.  ($j$ as defined is a branch of $s^{-1}$.)  Moreover, $z$ is in no other residue classes than $s(j_{z,k}) \bmod p_k$ for all positive integers $k \leq N$.  Therefore, the left-hand side of \res{ctr:dominate} evaluated at $z$ is $\sum_{i=0}^N \alpha_{i,j_{z,i}}$ and \res{ctr:dominate} can be re-written as 
	\[
	 \sum_{(j_0,j) \in \times\limitsZero \{1, \cdots, p_i\}}
	 \left(\sum_{i=0}^N \alpha_{i, j_i}\right)
	 I_{s(j)\bmod N!_p} 
	 \rel I_S
	 \numberPre{Ct} \label{ctr:dominate:product}
	.\]
	
	\res{ctr:dominate:product} can be decomposed point-by-point as the set of constraints
	\begin{align*}
	&\left\{\sum_{(j_0, j) \in \times\limitsZero \{1, \cdots, p_i\}}  I_{s(j) \bmod N!_p}(z) \sum_{i=0}^N \alpha_{i, j_i}
	\rel I_\probset(z)\right\}_{z \in \Ints}
	\numberPre{Ct} \label{ctr:points}
	.\end{align*}
	Breaking the left and right-hand sides of the inequalities \ref{ctr:points} into cases of which modular class representative and which value in the indicator range are picked out by $z$, respectively:
	\begin{align*}
	&\left\{
	\begin{array}{@{}lr@{}}
	\sum_{i=0}^N \alpha_{i, j_i} & z \in s(j) \bmod N!_p
	\end{array}	
	\right\}_{\splitInds}
	\rel
	\begin{cases}
	0 & z \in \Ints \setsub S
	\\
	1 & z \in S
	\end{cases}
	\\=&
	\left\{
	\sum_{i=0}^N \alpha_{i, j_i}
	\rel
	\begin{cases}
	0 & z \in \Ints \setsub S \cap s(j) \bmod N!_p
	\\
	1 & z \in S \cap s(j) \bmod N!_p
	\end{cases}
	\right\}_{\splitInds}
	&\textnormal{Combine conditions}
	\\=&
\left\{
	\sum_{i=0}^N \alpha_{i, j_i} \rel
	\begin{cases}
	 1 & S \cap s(j) \bmod N!_p \ne \emptyset
	\\
	 0
	 & \textnormal{otherwise}
	\end{cases}
	\right\}_{\splitInds}
	&\textnormal{Combine inequalities}
	\\=&
	\sum_{i=0}^N \alpha_{i, j_i} \rel
	\begin{cases}
	1 & (j_0,j):s(j) \in S\bmod N!_p
	\\
	0 & \textnormal{otherwise}
	\end{cases}
	&\textnormal{Condition on $j$}
	\\=&
	\sum_{i=0}^N \alpha_{i, j_i} \rel
	\begin{cases}
	1 & j \in \{1\} \times s^{-1}(S \bmod N!_p)
	\\
	0 & \textnormal{otherwise}
	\end{cases}
	& \textnormal{Definition of $\cdot^{-1}$}
	\numberPre{Ct} \label{ctr:dominate:binary}
	.\end{align*}
	
	\res{exr:finite} is, dividing out repetitions,
	\[
	\extPrime_{\alpha \in \ell_\mathbb{Q}} \sum_{j \in \times\limitsZero \{1, \cdots, p_i\}} \sum_{i=0}^N  \frac{1}{\prod_{k \ne i} p_k} \frac{\alpha_{i,j}}{p_i}
	\numberthis\label{exr:repeat}
	,\]
	due to there being $\prod_{k \ne i} p_k$ repetitions of each $\{1, \cdots, p_i\}$ in the $i$th component of \[\times\limitsZero \{1, \cdots, p_i\}
	.\]
	Factoring out (the constant) $\frac{1}{N!_p}$ from \res{exr:repeat} obtains
	\[
	\extPrime_{\alpha \in \times\limitsZero \mathbb{Q}^{p_i}}  \frac{1}{N!_p} \sum_{j \in \times \limitsZero \{1, \cdots, p_i\}} 
	\sum_{i=0}^N \alpha_{i, j_i}
	\numberPre{}\label{exr:sums:alpha}
	.\]

The indices of $\alpha$ being summed over in  \[
\sum_{j \in \times\limitsZero \{1, \cdots, p_i\}} 
\sum\limitsZero \alpha_{i, j_i}
\numberPre{}\label{exr:sums:alpha:set}
\] of \res{exr:sums:alpha} forms a multi-set of pairs \[
\left\{(i, j)^{\begin{cases}0 & j > p_i \\ N!_p/p_i & j \in \{1, \cdots, p_i\}\end{cases}}
\right
\}
\numberPre{}\label{exr:indices:multiset}
\]  
whose multiplicities depend on the second coordinate $j$.  (Because $p_i$ divides $N!_p$, the multiplicities are non-negative integers, and thus valid.)

For every $\mathcal{J}$ obeying Condition \ref{cnd:multiset}, the indices of $\alpha$ being summed over in 
\[
\sum_{j \in \{1\} \times \mathcal{J}} 
\sum\limitsZero \alpha_{i, j_i}
\numberPre{}\label{exr:sums:alpha:multiset}
\] forms the multi-set \ref{exr:indices:multiset} as well.  Therefore, sums \ref{exr:sums:alpha:set} and \ref{exr:sums:alpha:multiset} must have the same multi-sets of terms and so, themselves being the sums of precisely the terms thereof, must be equal:
\[
\sum_{j \in \times\limitsZero \{1, \cdots, p_i\}} 
\sum\limitsZero \alpha_{i, j_i}
=
\sum_{j \in \{1\} \times \mathcal{J}} 
\sum\limitsZero \alpha_{i, j_i}
\numberPre{Id}\label{idn:sums:alpha}\]

By \res{idn:sums:alpha}, \res{exr:sums:alpha}, and in turn Expressions \ref{exr:repeat} and \ref{exr:finite}, are equal to 
\[
\extPrime_{\alpha \in \times\limitsZero \mathbb{Q}^{p_i}}  \frac{1}{N!_p} \sum_{j \in \{1\} \times \mathcal{J}}
\sum_{i=0}^N \alpha_{i, j_i}
\numberPre{}\label{exr:sum:aplha}
,\]
for every $\mathcal{J}$ obeying \ref{cnd:multiset}.

Sums \ref{exr:sums:alpha:multiset} can be split up as
\[
\sum_{j \in \{1\} \times \left[\mathcal{J} \cap s^{-1}(S \bmod N!_p)\right] } 
\sum\limitsZero \alpha_{i, j_i} + \sum_{j \in \{1\} \times \left[\mathcal{J} \setsub s^{-1}(S \bmod N!_p) \right]} 
\sum\limitsZero\alpha_{i, j_i}
\numberPre{}\label{exr:rearranged}
.\] 

Imposing \res{ctr:dominate:binary} on \res{exr:rearranged} yields a lower bound, that of
\begin{align*}
&\sum_{j \in \{1\} \times \left[\mathcal{J} \cap s^{-1}(S \bmod N!_p)\right] } 
1 + \sum_{j \in \{1\} \times \left[\mathcal{J} \setsub s^{-1}(S \bmod N!_p) \right]} 
0
 \\=& \card{\mathcal{J} \cap s^{-1}(S \bmod N!_p)}
\numberPre{A}\label{apx:sum:binary}
,\end{align*}
in which the equality leading to \ref{apx:sum:binary} is by the definition of cardinality (\ref{def:multiset}).

Combining \res{exr:rearranged} and \res{apx:sum:binary}, \res{exr:finite} is bounded below by
\begin{align*}
&\max_{\mathcal{J}} \extPrime_{\alpha \in \times_{i=1}^N \mathbb{Q}^{p_i}}  \frac{\card{\mathcal{J} \cap s^{-1}(S \bmod N!_p)}}{N!_p}
=
\max_{\mathcal{J}}  \frac{\card{\mathcal{J} \cap s^{-1}(S \bmod N!_p)}}{N!_p}
\numberPre{A}\label{apx:sums:cardinality}
,\end{align*} in which, for all $N \in \IntsPos$, $\mathcal{J}$ obeys \res{cnd:multiset}.  (The {\LHS} of \res{apx:sums:cardinality} does not depend on $\alpha$, so can be dropped to obtain the {\RHS}.)

Taking $N \to \infty$ in \res{apx:sums:cardinality} and \res{exr:finite}, while equating the latter limit to the {\LHS} of \res{idn:objective:start}, yields \res{apx:optimization:primes}.
\end{proof}

\remStar The $\mathcal{J}$ of \res{cnd:multiset} is in some sense a re-arrangement of the components of $\times\limitsOne \{1, \cdots, p_i\}$.
\remStar The requirement $\proj_n(\mathcal{J}) = \{1^{N!_p/p_n}, \cdots, p_n^{N!_p/p_n}\}$ of \res{cnd:multiset} can be weakened to \[\proj_n(\mathcal{J}) \subseteq \{1^{N!_p/p_n}, \cdots, p_n^{N!_p/p_n}\} \label{}. \]  Moreover, the maximum in \res{apx:optimization:primes} is attained by some $\mathcal{J}$ for which $\proj_n(\mathcal{J}) \subseteq \proj_n(\mathcal{J}) \cap \proj_n\left(s^{-1}(S \bmod N!_p)\right)$ (where the intersection obeys \res{def:multiset}).

The following proposition further concerns the projections of a multi-set (which appear in the constraint set specified in \res{cnd:multiset}).
\begin{pro}[Exchange Preserving $\proj_n(\mathcal{J})$]
	\label{pro:projection}
Let $N \in \IntsPos \cup \{\infty\}$, $\mathcal{J} \subseteq \times_{n \in \inds} \{1, \cdots, p_n\}$, a multi-set with at least two elements, say $j,k$, and $n_* \in \inds$.  Let $\mathcal{J}':= \mathcal{J} \setsub \{j,k\} \cup \{j', k'\}$; where \begin{enumerate*} \item $j' := (j_{-{n_*}}, k_{n_*})$, \item $k' := (k_{-n_{*}}, j_{n_*})$, \item $\setsub$ decrements multiplicities as follows: for all objects $a$ and $b,c \in \Ints$, $\{a^b, \cdots\}\setsub \{a^c, \cdots\} = \{a^{b-c}, \cdots\}$, and \item $\cup$ increments them as follows: for all objects $a$ and $b,c \in \IntsNonneg$, $\{a^b, \cdots\} \cup \{a^c, \cdots\} = \{a^{b+c}, \cdots\}$.  \end{enumerate*}  Then, for all $n \in \inds$, $\proj_n(\mathcal{J}') = \proj_n(\mathcal{J})$.
\end{pro}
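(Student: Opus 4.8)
The plan is to exploit that passing from $\mathcal{J}$ to $\mathcal{J}'$ changes only the pair $\{j,k\}$, replacing it by $\{j',k'\}$, and that this replacement touches only the $n_*$-th coordinate, where it transposes the two values $j_{n_*}$ and $k_{n_*}$. Accordingly, I would fix an arbitrary $n \in \inds$ and prove $\proj_n(\mathcal{J}') = \proj_n(\mathcal{J})$, treating the cases $n \neq n_*$ and $n = n_*$ separately.

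First I would reduce the claim to a statement about the swapped pair alone. By \res{def:coordinates}, the multiplicity of a value $v$ in $\proj_n(A)$ is the total multiplicity, within $A$, of those elements whose $n$-th coordinate equals $v$; this count depends additively on the multiplicity function of $A$, so $\proj_n$ respects the increment and decrement operations $\cup$ and $\setsub$ fixed in the statement. Hence $\proj_n(\mathcal{J}') = \proj_n(\mathcal{J}) \setsub \proj_n(\{j,k\}) \cup \proj_n(\{j',k'\})$, and it suffices to establish the multi-set equality $\proj_n(\{j,k\}) = \proj_n(\{j',k'\})$, namely $\{j_n, k_n\} = \{(j')_n, (k')_n\}$.

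Then I would dispatch the two cases directly from the definitions $j' := (j_{-n_*}, k_{n_*})$ and $k' := (k_{-n_*}, j_{n_*})$. When $n \neq n_*$, these leave the $n$-th coordinate untouched, so $(j')_n = j_n$ and $(k')_n = k_n$ and the required equality holds termwise. When $n = n_*$, the same definitions give $(j')_{n_*} = k_{n_*}$ and $(k')_{n_*} = j_{n_*}$, so the pair $\{j_{n_*}, k_{n_*}\}$ is merely transposed and the multi-set equality again holds. As $n$ was arbitrary, this yields the conclusion.

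The computation is essentially immediate, so the only points needing care are bookkeeping ones. At the outset I would record that $\setsub \{j,k\}$ is well-defined, no multiplicity being driven negative, precisely because $j$ and $k$ are genuine elements of $\mathcal{J}$; and I would dispose of the degenerate case $j = k$, which is permitted since $\mathcal{J}$ is a multi-set (\res{def:multiset}) and the two chosen elements may coincide at multiplicity at least two. There $j' = k' = j$, whence $\mathcal{J}' = \mathcal{J}$ and the statement is trivial. No genuine obstacle remains beyond this case analysis.
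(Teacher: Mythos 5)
Your proposal is correct and follows essentially the same route as the paper's (very terse) proof: both rest on the observation that removing $\{j,k\}$ and adding $\{j',k'\}$ have cancelling effects on every $\proj_n$, since the swap only transposes the $n_*$-th coordinates and hence $\proj_n(\{j,k\}) = \proj_n(\{j',k'\})$ as multi-sets. Your write-up merely makes explicit the additivity of $\proj_n$ over the multiplicity operations and the degenerate case $j=k$, which the paper leaves implicit.
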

\begin{proof}
$j,k \in \mathcal{J}$ implies subtracting (or adding) $\{j,k\}$ has the reverse effect on $\proj_n$ of the resultant multi-set, for all $n \in \inds$, as adding $\{j',k'\}$ (or subtracting, respectively). 
\end{proof}

Given the frequency with which $\times_{n \in \inds} \{1, \cdots, p_n\}$ occurs, a descriptive notation specific to its elements is given.
\newcommand{\passPlural}{passing through, in their $i$th components, }
\begin{defn}[Paths]
	Given any $N \in \IntsPos \cup \{\infty\}$, multi-set $\mathcal{J} \subseteq \times_{n \in \inds} \{1, \cdots, p_n\}$, $m \in \IntsPos$, and \[
	b^m \in \times_{n \in \inds} \{1, \cdots, p_n\}
	,\] $b$ is a \textbf{path}.  An illustration of the path $(1, 2)$ ($N = 2$):
	\begin{tikzpicture}[
	  font=\sffamily,
	  every matrix/.style={ampersand replacement=\&,column sep=.5cm,row sep=0cm},
	  source/.style={draw,thick,rounded corners,fill=yellow!20,inner sep=.3cm},
	  process/.style={draw,thick,circle,fill=yellow!20},
	  sink/.style={source,fill=green!20},
	  dots/.style={gray,scale=2},
	  to/.style={->,shorten >=1pt,semithick,font=\sffamily\footnotesize},
	  every node/.style={align=center}]
	
\matrix{
	      			\& \node[process] (daq) {2}; 
	      			\\
	  \node[process] (1) {1};\& \node (buffer) {1};
	    \\
	  };
	
\draw[] (1) -- node[midway,right] {} (daq);
	\end{tikzpicture}, where each column array of integers corresponds to a set in the product $\times_{n \in \inds} \{1, \cdots, p_n\}$.
	
	For every $j \in \IntsPos$ and set $A \subseteq \{1, \cdots, p_j\}$, $b$ \textbf{passes} through, in its $j$th component, $A$ iff $b_j \in A$.  In the illustration above, the path shown passes through $\{1\}$ (both $\{2\}$ and $\{1,2\}$) in the first (second) component, respectively.
\end{defn}

Combinatorial arguments are described using the following terminology.
\begin{defn}[Life]
	Given any $N \in \IntsPos \cup \{\infty\}$, set $A \subseteq \times_{n \in \inds} \{1, \cdots, p_n\}$, and multi-set $\mathcal{J} \subseteq\times_{n \in \inds} \{1, \cdots, p_n\} $, a path in $\mathcal{J}$ is \textbf{alive} iff it is in $A$.  Exchanging components as in \res{pro:projection} is \textbf{enlivening} iff $j$ is dead and $j'$ alive or $k$ is dead and $k'$ is alive; and is (life) \textbf{preserving} iff, for each $i \in \{j,k\}$ that is alive, so is $i'$.  An enlivening and preserving exchange between an alive path and dead one is a (successful) \textbf{donation} (that leaves both $j',k'$ alive).
\end{defn}

\begin{pro}[Re-Directing Paths]
	\label{pro:redirect}
	Fix arbitrary  \begin{enumerate} \item $i, j \in \IntsPos$ (not necessarily distinct), \item a multi-set $\mathcal{J} \subseteq \times_{n \in \IntsPos} \{p_1, \cdots, p_n\}$ of finite cardinality, \item and, $\forall k \in \{i,j\}$, $A_k \subseteq \{1, \cdots, p_k\}$. \end{enumerate}  Suppose \begin{enumeratecnd*} \item \label{cnd:paths} $\textnormal{card}\left({\proj_i(\mathcal{J}) \cap A_i}\right) \leq \textnormal{card}\left({\proj_j(\mathcal{J}) \cap A_j}\right)$.\end{enumeratecnd*}  Then there exists a multiset $\mathcal{J}'$ such that \begin{enumeratecnd*} \item\label{cnd:preserve} $\proj_n(\mathcal{J}') = \proj_n(\mathcal{J}) $ for all $n \in \inds$ and \item \label{cnd:subsume} for all $v \in \mathcal{J}'$, $v_i \in A_i \implies v_j \in A_j$.\end{enumeratecnd*}
\end{pro}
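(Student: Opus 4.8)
The plan is to repeatedly apply the component-exchange of \res{pro:projection} so as to remove, one at a time, the paths that violate \res{cnd:subsume}, drawing on \res{cnd:paths} to guarantee that a legal exchange is always available. Adopting the vocabulary of the preceding definition, I take the set of ``alive'' paths to be those satisfying the implication $v_i \in A_i \implies v_j \in A_j$; the dead paths are exactly the offenders to be eliminated. First I would split the (finitely many) copies making up $\mathcal{J}$ into four classes according to whether $v_i \in A_i$ and whether $v_j \in A_j$, writing $n_{11}, n_{10}, n_{01}, n_{00}$ for the respective counts, so that $n_{10}$ is the number of dead paths and $n_{01}$ the number of paths with $v_i \notin A_i$ and $v_j \in A_j$, which will serve as ``donors.''

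By the definitions of cardinality and multi-set intersection in \res{def:multiset}, $\card{\proj_i(\mathcal{J}) \cap A_i}$ is the number of paths (with multiplicity) whose $i$th coordinate lies in $A_i$, namely $n_{11} + n_{10}$, and similarly $\card{\proj_j(\mathcal{J}) \cap A_j} = n_{11} + n_{01}$. Hence \res{cnd:paths} is precisely the inequality $n_{10} \leq n_{01}$: there are at least as many donors as dead paths. Verifying this equivalence is the heart of the matter and the step I expect to be the main obstacle, as the rest is bookkeeping.

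I would then induct on the number $n_{10}$ of dead paths. If $n_{10} = 0$, no path violates the implication, so \res{cnd:subsume} holds and $\mathcal{J}' := \mathcal{J}$ suffices; this also covers the degenerate case $i = j$, in which $n_{10} = 0$ automatically. If $n_{10} > 0$, then $n_{01} \geq n_{10} > 0$ supplies a donor $w$ (with $w_i \notin A_i$, $w_j \in A_j$), necessarily distinct from any dead path $v$ (with $v_i \in A_i$, $v_j \notin A_j$). Exchanging their $j$th coordinates as in \res{pro:projection} sends $v$ to $(v_{-j}, w_j)$, which is now alive since its $i$th coordinate still lies in $A_i$ and its $j$th now lies in $A_j$, and sends $w$ to $(w_{-j}, v_j)$, which stays alive since its $i$th coordinate still avoids $A_i$---a donation that lowers the dead count to $n_{10} - 1$. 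Because \res{pro:projection} leaves $\proj_n$ fixed for every $n \in \inds$, the new multi-set has the same projection cardinalities as $\mathcal{J}$ and so still obeys \res{cnd:paths}; the induction hypothesis then produces a multi-set satisfying \res{cnd:subsume} whose projections match, giving \res{cnd:preserve} by transitivity.
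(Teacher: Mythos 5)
Your proposal is correct and follows essentially the same route as the paper's proof: both classify paths by membership of their $i$th and $j$th coordinates in $A_i$ and $A_j$, identify \res{cnd:paths} with the inequality (donors $\geq$ dead paths, your $n_{10} \leq n_{01}$) by cancelling the common count $n_{11}$, and then repair the offending paths via the coordinate exchanges of \res{pro:projection}. The only difference is presentational---you organize the donations as an induction with an explicitly maintained invariant, where the paper pairs donors with recipients all at once---which is a refinement, not a new idea.
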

\begin{proof}
	Suppose $i = j$.  Substituting $j$ into $A_i$ shows $A_i = A_j$, and the conclusion is trivial.
	
	Suppose instead $i \ne j$.
	
	Let $\mathcal{J}_\textnormal{donor}$ be the sub-multiset of all paths (and their multiplicities) in $\mathcal{J}$ whose $j$th components pass through $A_j$ but whose $i$th ones do not pass through $A_i$ and $\mathcal{J}_\textnormal{recipient}$ the sub-multiset of all paths whose $i$th components pass through $A_i$ but whose $j$th ones do not pass through $A_j$.  Then \res{cnd:subsume} is equivalent to the condition that all paths of $\mathcal{J}'$ whose $i$th component passes through $A_i$ are alive with respect to $\times_{n \in \IntsPos \setsub \{i,j\}} \{1, \cdots, p_n\} \times A_i \times A_j$.  As for $\mathcal{J}$, each path in $\mathcal{J}_\textnormal{donor}$ is a potential donor to each path in $\mathcal{J}_\textnormal{recipient}$ (with the donations occurring in the $j$th components).  Therefore, by \res{pro:projection}, it suffices to show \begin{enumeratecnd*} \item \label{cnd:donor}
	$\card{\mathcal{J}_\textnormal{donor}} \geq \card{\mathcal{J}_\textnormal{recipient}}$.
	\end{enumeratecnd*}
To invoke 
	\res{cnd:paths}, observe
	\begin{align*}
	\textnormal{card}\left({\proj_i(\mathcal{J}) \cap A_i}\right) &=  \card{\mathcal{J}_\textnormal{recipient}} + \card{ \mathcal{J} \cap \times_{n \in \IntsPos \setsub \{i,j\}} \{1, \cdots, p_n\} \times A_i \times A_j}
	\\
	\textnormal{card}\left({\proj_j(\mathcal{J}) \cap A_j}\right) &=  \card{\mathcal{J}_\textnormal{donor}} + \card{ \mathcal{J} \cap \times_{n \in \IntsPos \setsub \{i,j\}} \{1, \cdots, p_n\} \times A_i \times A_j}
	,\end{align*}
	which by plugging into \res{cnd:paths} gives \ref{cnd:donor}.
\end{proof}

\remStar If $j = 1$, the proposition is trivial.

\propForCor{redirect}

\begin{corollary}
	\label{cor:redirect:proportions}
	Fix arbitrary $N \in \IntsPos$, $i, j \in \inds$, and $\mathcal{J} \subseteq \times_{n \in \inds} \{p_1, \cdots, p_N\}$ such that, for $n \in \{i,j\}$,  
	\[
	\proj_n(\mathcal{J}) = \left\{1^{N!_p/p_n}, \cdots, p_n^{N!_p/p_n}\right\}
	\numberPre{Ct}\label{ctr:two}
	.\]  Let $A_i$ and $B_i$ be ordinary sets that partition $\{1, \cdots, p_i\}$ and, similarly, $\{A_j, B_j\}$ a partition of $\{1, \cdots, p_j\}$; and suppose $\frac{\card{A_i}}{\card{B_i}} \leq \frac{\card{A_j}}{\card{B_j}}$. Then there exists $\mathcal{J}'$ such that (i) $\proj_n(\mathcal{J}') = \proj_n(\mathcal{J}) $ for all $n \in \inds$ and (ii) for all $v \in \mathcal{J}'$, $v_i \in A_i \implies v_j \in A_j$.
\end{corollary}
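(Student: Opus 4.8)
The plan is to obtain the corollary as an immediate consequence of \res{pro:redirect}: its two conclusions, (i) and (ii), are verbatim \res{cnd:preserve} and \res{cnd:subsume}, so the entire task reduces to verifying that the corollary's hypotheses force the single premise \res{cnd:paths}, namely $\card{\proj_i(\mathcal{J}) \cap A_i} \le \card{\proj_j(\mathcal{J}) \cap A_j}$. In other words, I must translate the ratio inequality $\card{A_i}/\card{B_i} \le \card{A_j}/\card{B_j}$ into this cardinality inequality.

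First I would compute the two cardinalities using \res{ctr:two}. For $n \in \{i,j\}$ each value in $\{1,\dots,p_n\}$ appears in the multi-set $\proj_n(\mathcal{J})$ with multiplicity $N!_p/p_n$, whereas $A_n$ is an ordinary set, so each of its elements carries multiplicity $1$. Under the intersection convention of \res{def:multiset}---each element common to both factors is kept with the larger of its multiplicities---every element of $A_n$ survives in $\proj_n(\mathcal{J}) \cap A_n$ with multiplicity $N!_p/p_n$ (which is at least $1$ since $p_n \mid N!_p$), while values outside $A_n$ disappear. Summing multiplicities gives $\card{\proj_n(\mathcal{J}) \cap A_n} = \card{A_n}\, N!_p/p_n$ for $n \in \{i,j\}$.

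Next I would establish the equivalence of the two inequalities. Since $\{A_n, B_n\}$ partitions $\{1,\dots,p_n\}$, we have $\card{B_n} = p_n - \card{A_n}$; assuming $B_i, B_j \ne \emptyset$ so the denominators are positive, cross-multiplication rewrites $\card{A_i}/\card{B_i} \le \card{A_j}/\card{B_j}$ as $\card{A_i}\, p_j \le \card{A_j}\, p_i$, that is $\card{A_i}/p_i \le \card{A_j}/p_j$. Multiplying by the positive constant $N!_p$ turns this into $\card{A_i}\, N!_p/p_i \le \card{A_j}\, N!_p/p_j$, which by the preceding paragraph is precisely $\card{\proj_i(\mathcal{J}) \cap A_i} \le \card{\proj_j(\mathcal{J}) \cap A_j}$, i.e.\ \res{cnd:paths}. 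Feeding $\mathcal{J}$, $i$, $j$, $A_i$, $A_j$ into \res{pro:redirect} (whose finiteness requirement holds, as the full projections force $\card{\mathcal{J}} = N!_p < \infty$) then delivers the desired $\mathcal{J}'$.

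I expect no conceptual obstacle; the only points needing care are purely bookkeeping. One is applying \res{def:multiset} correctly, so that intersecting with the ordinary set $A_n$ preserves the multiplicity $N!_p/p_n$ rather than collapsing it to $1$---this is exactly what makes the ratio hypothesis, rather than a bare comparison of $\card{A_i}$ with $\card{A_j}$, the natural sufficient condition. The other is the tacit nondegeneracy $B_i, B_j \ne \emptyset$ needed for the ratios to be defined; the degenerate case, reading an empty-complement ratio as $+\infty$, is either vacuous or settled at once.
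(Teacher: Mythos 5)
Your proposal is correct and follows essentially the same route as the paper's own proof: cross-multiplying the ratio hypothesis (via $\card{B_n} = p_n - \card{A_n}$) to obtain $p_j\card{A_i} \leq p_i\card{A_j}$, counting $\card{A_n}\,N!_p/p_n$ paths through $A_n$ in the $n$th component using \res{ctr:two}, and then invoking \res{pro:redirect}. Your explicit treatment of the multi-set intersection convention and of the degenerate case $B_i$ or $B_j$ empty is slightly more careful than the paper's, but the argument is the same.
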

\begin{proof}
	Let $a_i := \card{A_i}$.
	\begin{align*}
	\left( \frac{\card{A_i}}{\card{B_i}} \leq \frac{\card{A_j}}{\card{B_i}} \right)
	&\iff 
	\left( \frac{a_i}{\card{\{1, \cdots, p_i\} \setsub A_i}} \leq \frac{\card{A_j}}{\card{\{1, \cdots, p_j\} \setsub A_j}} \right) 
	\\&\iff 
	\left( \frac{a_i}{p_i - a_i} \leq \frac{\card{A_j}}{p_j - \card{A_j}} \right)
	\\&\iff 
	\left( a_i p_j - a_i \card{A_j} \leq p_i \card{A_j} - a_i \card{A_j} \right) 
	\\&\iff
	\left(p_j a_i \leq p_i \card{A_j}\right)
	.\end{align*}  By \res{ctr:two}, $\card{A_j} N!_p/p_j$ paths pass through, in their $j$th components, $A_j$, whereas only $a_i N!_p/p_i \leq \frac{p_i}{p_j}\card{A_j} \frac{N!_p}{p_i} = \card{A_j} N!_p/p_j$ paths pass through, in their $i$th components, $A_i$.  Thus \res{pro:redirect} can be invoked.
\end{proof}

The upshot of \res{cor:redirect:proportions} is the following lemma.

\begin{lem}[Intersecting Products]
	\label{lem:redirect:one}
	Fix arbitrary $N \in \IntsPos$ and partitions $(A_n, B_n)$ of $\{1, \cdots, p_n\}$ for $n = 1, \cdots, N$.  Let $n_* \in \argmin_{n \in \inds} \frac{\card{A_n}}{\card{B_n}}$.  Then there exists $\mathcal{J}$ satisfying \res{cnd:multiset} for which 
	\[
	\card{\mathcal{J} \cap \times_{n \in \inds} B_n} = \frac{N!_p}{p_{n_*}} \card{B_{n_*}}
	.\]
\end{lem}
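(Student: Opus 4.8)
The plan is to realize the claimed number as the cardinality of an explicit product‑fiber by constructing a single multi‑set $\mathcal{J}$ obeying \res{cnd:multiset} in which membership of a path in the full product $\times_{n\in\inds}B_n$ is governed entirely by its $n_*$th coordinate. Concretely, it suffices to produce $\mathcal{J}$ satisfying \res{cnd:multiset} such that, for every $v\in\mathcal{J}$ and every $n\in\inds$, $v_{n_*}\in B_{n_*}\implies v_n\in B_n$. Granting this, one inclusion is immediate, since $v\in\times_{n\in\inds}B_n$ forces $v_{n_*}\in B_{n_*}$, and the implication supplies the reverse, so $\mathcal{J}\cap\times_{n\in\inds}B_n$ is exactly the sub‑multiset of paths with $v_{n_*}\in B_{n_*}$. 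By \res{cnd:multiset}(ii) each of the $\card{B_{n_*}}$ members of $B_{n_*}$ occupies the $n_*$th coordinate with multiplicity $N!_p/p_{n_*}$, so this sub‑multiset has cardinality $\frac{N!_p}{p_{n_*}}\card{B_{n_*}}$, as required.

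To secure the implication I would proceed one coordinate at a time using \res{pro:redirect}. Starting from any $\mathcal{J}$ meeting \res{cnd:multiset}, for instance the full product, for each $n\neq n_*$ I would invoke \res{pro:redirect} with $i:=n_*$, $j:=n$, and the ordinary sets $B_{n_*}$ and $B_n$ in the roles of $A_i,A_j$. Its conclusion \res{cnd:subsume} then reads $v_{n_*}\in B_{n_*}\implies v_n\in B_n$, while \res{cnd:preserve} keeps every projection, hence \res{cnd:multiset}, intact. The hypothesis \res{cnd:paths} becomes $\card{\proj_{n_*}(\mathcal{J})\cap B_{n_*}}\leq\card{\proj_n(\mathcal{J})\cap B_n}$, which under \res{cnd:multiset}(ii) is the comparison $\frac{N!_p}{p_{n_*}}\card{B_{n_*}}\leq\frac{N!_p}{p_n}\card{B_n}$ of fiber sizes; this is precisely the cardinality expression of the ratio extremum defining $n_*$, and its availability for every $n$ is the essential place where the extremal choice of $n_*$ is used.

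The step I expect to demand the most care is showing that these $N-1$ applications can be carried out in succession without a later one undoing an earlier implication, since every pairwise redirection involves the shared coordinate $n_*$. This is where the component‑locality of the redirection is decisive: in the proof of \res{pro:redirect} the donations occur only in the $j$th component, via the coordinate exchange of \res{pro:projection} with $n_*:=j$, so processing coordinate $n$ alters only the $n$th entries of the paths involved and leaves every other coordinate—in particular the $n_*$th and each previously treated $n'$—untouched. Consequently the implication $v_{n_*}\in B_{n_*}\implies v_{n'}\in B_{n'}$ secured earlier persists, and because the projections never change the count comparison licensing \res{pro:redirect} continues to hold at every stage. Iterating over all $n\neq n_*$ yields the desired $\mathcal{J}$, and the cardinality computation of the first paragraph finishes the proof; one may substitute \res{cor:redirect:proportions} for \res{pro:redirect} at each step to phrase the hypothesis directly in terms of the partition ratios.
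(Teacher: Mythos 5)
Your proposal is correct and is essentially the paper's argument: start from the full product, iterate the redirection machinery so that every coordinate is anchored at $n_*$, and then identify $\mathcal{J} \cap \times_{n \in \inds} B_n$ with the $B_{n_*}$-fiber, whose cardinality $\frac{N!_p}{p_{n_*}}\card{B_{n_*}}$ is forced by \res{cnd:multiset}(ii). The one substantive divergence is your role assignment. The paper invokes \res{cor:redirect:proportions} with $i := n$, $j := n_*$ on the sets $A_n$, $A_{n_*}$, yielding $v_n \in A_n \implies v_{n_*} \in A_{n_*}$; you invoke \res{pro:redirect} with $i := n_*$, $j := n$ and the $B$-sets in the $A$-slots, yielding $v_{n_*} \in B_{n_*} \implies v_n \in B_n$. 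Since $(A_n, B_n)$ partitions $\{1, \cdots, p_n\}$, these are contrapositives, so the target conditions on the final multi-set are identical---but your orientation is the better one for the iteration. Donations in the proof of \res{pro:redirect} mutate only $j$th coordinates, so with your $j := n$ each stage touches a fresh coordinate and persistence of earlier implications is automatic, exactly as you argue. Under the paper's assignment $j := n_*$, donations mutate the shared $n_*$th coordinate, and a donor that surrenders its $A_{n_*}$ membership while having $v_{n'} \in A_{n'}$ for a previously processed $n'$ breaks the earlier implication; the paper compresses all of this into ``can be iteratively applied,'' leaving a composability gap that your orientation (or an equivalent repair) closes. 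Your observation that black-box use of the existence statement does not suffice, and that the coordinate-locality inside the proof must be cited, is a genuine improvement in rigor, not padding.

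One caution on a single sentence of yours. The fiber inequality you need, $\frac{N!_p}{p_{n_*}}\card{B_{n_*}} \leq \frac{N!_p}{p_n}\card{B_n}$, is equivalent to $\frac{\card{A_{n_*}}}{\card{B_{n_*}}} \geq \frac{\card{A_n}}{\card{B_n}}$, i.e.\ it requires $n_*$ to \emph{maximize} $\card{A_n}/\card{B_n}$, whereas the lemma as printed takes an $\argmin$; so your claim that the comparison ``is precisely the cardinality expression of the ratio extremum defining $n_*$'' is backwards relative to the printed text. The printed $\argmin$ is evidently a misprint: with it the lemma is false (take $N=2$, $B_1 = \{1\} \subseteq \{1,2\}$ and $B_2 = \{1,2\} \subseteq \{1,2,3\}$, so the $\argmin$ is $n_* = 2$ and the claimed count $4$ exceeds the at most $3$ paths through $B_1$); moreover the paper's own invocation of \res{cor:redirect:proportions} with $j := n_*$ requires the maximizing reading, and the downstream use in \res{cor:lub:lower:product} reads $\frac{N!_p}{p_{n_*}}\card{B_{n_*}}$ as $\min_{n \in \inds} \frac{N!_p}{p_n}\card{B_n}$. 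You silently corrected the same misprint the paper's proof silently corrects---the right call, but state the correction explicitly rather than asserting agreement with the stated $\argmin$.
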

\begin{proof}
	Start with any $\mathcal{J}$ satisfying \res{cnd:multiset}, {\eg} $\times\limitsOne \{1, \cdots, p_i\}$.  By construction of $n_*$, \res{cor:redirect:proportions} can be iteratively applied over all $n \in \inds$ path components, with $i := n$ and $j := n_*$.
		
	Then, in the final $\mathcal{J}'$ for all $n \in \inds$, there are $\frac{N!_p}{p_{n_*}} \card{B_{n_*}}$ paths passing through, in their $n$th components, $B_n$ and, in the $n_*$th ones, $B_{n_*}$.  Moreover, every such path cannot pass through, in its $n'$th component, some $A_{n'}$ by construction of $\mathcal{J}'$ (doing so would have to pass through, in the $n_*$th component, $A_{n_*}$, a contradiction).  Therefore, all of these paths are in $\times_{n \in \inds} B_n$.
\end{proof}

\newcommand{\indRan}{\left\{i^1, \cdots, i^m\right\}}

The following two corollaries give simpler approximations of \res{pbm:extreme:PR} than \res{thm:range:prime}, and are collectively adequate, {\ie} without further use of the theorem, for ascertaining the probability of {\Primes} and residue classes.

\begin{corollary}[Product]
	\label{cor:lub:lower:product}
	Fix an arbitrary $S \subseteq \Ints$.  Suppose, for all $N \in \IntsPos$ and $\mathcal{I}_n \subseteq \Ints$ (for all $n \in \IntsPos$),
	\[s^{-1}(S \bmod N!_p) \supseteq 
	\times_{n \in \inds} \mathcal{I}_n
	\numberPre{A}\label{apx:shifts:product}
	.\]
	
	Then 
	\[\ext_{\mu \in PR} \mu(\probset) \rel \inf_{n \in \IntsPos} \frac{\card{\mathcal{I}_n}}{p_n}
	.\]
\end{corollary}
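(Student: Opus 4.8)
The plan is to feed a carefully chosen admissible multi-set $\mathcal{J}$ into the lower bound of \res{thm:range:prime} and to exploit the product structure of the hypothesis through \res{lem:redirect:one}. By \res{thm:range:prime} it would suffice to show that, for every $N \in \IntsPos$, there is a multi-set $\mathcal{J}$ obeying \res{cnd:multiset} with $\card{\mathcal{J} \cap s^{-1}(S \bmod N!_p)}/N!_p \geq \inf_{n \in \IntsPos}\card{\mathcal{I}_n}/p_n$, since letting $N \to \infty$ then delivers the claim. I would first dispose of the degenerate case: if $\inf_{n \in \IntsPos}\card{\mathcal{I}_n}/p_n = 0$, then $\sup_{\mu \in PR}\mu(S) \geq 0$ holds by nonnegativity, so I may assume the infimum is positive, whence every $\mathcal{I}_n$ is nonempty. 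Because $\times_{n \in \inds}\mathcal{I}_n$ is then a nonempty subset of $s^{-1}(S \bmod N!_p) \subseteq \times_{n=1}^N \{1,\dots,p_n\}$, each factor must satisfy $\mathcal{I}_n \subseteq \{1,\dots,p_n\}$.

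Fixing $N$, I would set $B_n := \mathcal{I}_n$ and $A_n := \{1,\dots,p_n\}\setsub \mathcal{I}_n$, so that $(A_n,B_n)$ partitions $\{1,\dots,p_n\}$ for each $n \in \inds$. Applying \res{lem:redirect:one} to these partitions produces a multi-set $\mathcal{J}$ obeying \res{cnd:multiset} with
\[\card{\mathcal{J} \cap \times_{n \in \inds} B_n} = \frac{N!_p}{p_{n_*}}\card{B_{n_*}}, \qquad n_* \in \argmin_{n \in \inds} \frac{\card{A_n}}{\card{B_n}}.\]
Since $\card{A_n} + \card{B_n} = p_n$, the ratio $\card{A_n}/\card{B_n} = p_n/\card{B_n} - 1$ is minimized exactly when $\card{B_n}/p_n$ is maximized, so $\card{B_{n_*}}/p_{n_*} = \max_{n \in \inds}\card{\mathcal{I}_n}/p_n$.

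It then remains to chain the inequalities. The hypothesis \res{apx:shifts:product} gives $\times_{n \in \inds} B_n = \times_{n \in \inds}\mathcal{I}_n \subseteq s^{-1}(S \bmod N!_p)$, so $\card{\mathcal{J} \cap s^{-1}(S \bmod N!_p)} \geq \card{\mathcal{J} \cap \times_{n \in \inds} B_n}$, and dividing by $N!_p$ yields
\[\frac{\card{\mathcal{J} \cap s^{-1}(S \bmod N!_p)}}{N!_p} \geq \frac{\card{B_{n_*}}}{p_{n_*}} = \max_{n \in \inds}\frac{\card{\mathcal{I}_n}}{p_n} \geq \inf_{n \in \IntsPos}\frac{\card{\mathcal{I}_n}}{p_n},\]
the final step holding because $\inds = \{1,\dots,N\}$ is a nonempty subset of $\IntsPos$. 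As this bound holds for the $\mathcal{J}$ produced above for each $N$, the maximum over admissible $\mathcal{J}$ in \res{apx:optimization:primes} is at least $\inf_{n \in \IntsPos}\card{\mathcal{I}_n}/p_n$ for every $N$; passing to the limit and invoking \res{thm:range:prime} finishes the proof. I expect the only genuine subtlety to be matching the orientation of the optimization in \res{lem:redirect:one}: the lemma is phrased through the $\argmin$ of $\card{A_n}/\card{B_n}$, and one must recognize that this selects the index maximizing the proportion $\card{B_n}/p_n$, which is precisely the quantity whose maximum over $\inds$ dominates the target infimum.
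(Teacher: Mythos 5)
Your overall route is exactly the paper's: combine \res{apx:shifts:product} with \res{apx:optimization:primes} of \res{thm:range:prime}, use \res{lem:redirect:one} to exhibit a good admissible multi-set for each $N$, and pass to the limit; your treatment of the degenerate case $\inf_{n \in \IntsPos}\card{\mathcal{I}_n}/p_n = 0$ and of the forced inclusion $\mathcal{I}_n \subseteq \{1, \cdots, p_n\}$ fills in details the paper leaves implicit. But the step you yourself flagged as the one genuine subtlety is resolved in the wrong direction, and the resulting intermediate claim is false. Taking \res{lem:redirect:one} at face value, you conclude that $n_*$ maximizes $\card{B_n}/p_n$ and hence that your $\mathcal{J}$ achieves
\[
\frac{\card{\mathcal{J} \cap \times_{n \in \inds} \mathcal{I}_n}}{N!_p} = \max_{n \in \inds} \frac{\card{\mathcal{I}_n}}{p_n}
.\]
No multi-set obeying \res{cnd:multiset} can achieve this unless all the proportions coincide: \res{cnd:multiset} forces every value in $\{1, \cdots, p_n\}$ to appear with multiplicity exactly $N!_p/p_n$ in $\proj_n(\mathcal{J})$, so for each fixed $n$ at most $\card{\mathcal{I}_n} N!_p/p_n$ paths of $\mathcal{J}$ have their $n$th component in $\mathcal{I}_n$, whence
\[
\card{\mathcal{J} \cap \times_{n' \in \inds} \mathcal{I}_{n'}} \leq \min_{n \in \inds} \frac{N!_p}{p_n} \card{\mathcal{I}_n}
,\]
which is strictly below $N!_p \max_{n \in \inds} \card{\mathcal{I}_n}/p_n$ whenever the proportions differ.

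The culprit is the $\argmin$ in the statement of \res{lem:redirect:one}, which you inherited by reading it literally; it is inconsistent with that lemma's own proof. There, \res{cor:redirect:proportions} is applied with $i := n$ and $j := n_*$, which requires $\card{A_n}/\card{B_n} \leq \card{A_{n_*}}/\card{B_{n_*}}$ for every $n$, so $n_*$ must be a \emph{maximizer} of $\card{A_n}/\card{B_n}$, equivalently a minimizer of $\card{B_n}/p_n$; the multi-set produced then meets $\times_{n \in \inds} B_n$ in exactly $\frac{N!_p}{p_{n_*}} \card{B_{n_*}} = \min_{n \in \inds} \frac{N!_p}{p_n}\card{B_n}$ paths. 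This min form is precisely what the paper's own proof of \res{cor:lub:lower:product} invokes. The repair to your write-up is immediate and costless: replace $\max$ by $\min$ throughout your chain; since $\inds \subseteq \IntsPos$, you still have $\min_{n \in \inds} \card{\mathcal{I}_n}/p_n \geq \inf_{n \in \IntsPos} \card{\mathcal{I}_n}/p_n$, and the rest of your argument (nondegeneracy, the containment step, the passage to the limit) goes through unchanged.
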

\begin{proof}
		Combining \res{apx:shifts:product} and \res{apx:optimization:primes} yields \[\ext_{\mu \in PR} \mu(\probset) \rel 
	\lim_{N \to \infty}
	\max_{\mathcal{J}}\frac{
		\card{
			\mathcal{J}
			\cap \times_{n \in \inds} \mathcal{I}_n
	}}
	{N!_p}
	,\]
	which, by \res{lem:redirect:one}, is lower bounded by
	\[
	\lim_{N \to \infty}
	\frac{
		\min_{n \in \inds} \frac{N!_p}{p_n}\card{\mathcal{I}_n}	 
	 }
	{N!_p}
	\geq
	\inf_{n \in \IntsPos} \frac{\card{\mathcal{I}_n}}{p_n}
	.\]
\end{proof}

In the next corollary, the total number of paths that can be made to pass through $\mathcal{I}_n$ in their $n$th components is once again the basis for bounding \res{pbm:extreme:PR}, albeit this time such a bound cannot be further simplified.  That is because $s^{-1}(S \bmod N!_p)$ may no longer be well approximated by a product as it was in \res{apx:shifts:product}; instead it is approximated by a product $\times_{n \in \inds} \mathcal{I}_n$ minus another one, say $\times_{n \in \inds} \mathcal{K}_n$.  Because of this complication, paths must be re-arranged more intricately to realize the maximum in \res{apx:optimization:primes}.  The ability to re-arrange enough paths to ``make up for" the subtraction of $\times_{n \in \inds} \mathcal{K}_n$ depends on the relative sizes of the sets $\mathcal{I}_n$ and $\mathcal{K}_n$.  (Namely, \res{apx:thin:count} below suffices.)

\newcommand{\nInK}{m}
\begin{corollary}[Difference of Products]
\label{cor:product:union}
Fix an arbitrary $S \subseteq \Ints$ and let
for all $n \in \inds$ (recall \res{def:coordinates}), 
\begin{enumerate} 
	\item $\mathcal{I}_n, \mathcal{K}_n \subseteq \Ints$,
	\item $\mathcal{H}_n := \mathcal{I}_n \setsub \mathcal{K}_n$, \item $I_n := \card{\mathcal{I}_n}$,
	\item $K_n := \card{\mathcal{K}_n}$, and
	\item $H_n := \card{\mathcal{H}_n}$
	.\end{enumerate} 
	Further, let $\inds_* := \{k \in \inds : \mathcal{H}_k \ne \emptyset\}$ and $N_* := \card{\inds_*}$; and, for all $N \in \IntsPos$, $n \in \inds$, $m \in \IntsPos$ and $\{i_1, \cdots, i_m\} \subset \IntsPos$, let $\inds_*^{-i} := \inds_* \setsub \{i_1, \cdots, i_m\}$ and $\inds^{-i} := \inds \setsub \left(\inds_* \cup \{i_1, \cdots, i_m\}\right)$.  Note $\left\{\{i_1, \cdots, i_m\},  \inds_*^{-i}, \inds^{-i}\right\}$  partitions $\mathcal{N}$. 

Suppose, for all $N \in \IntsPos$,
\[s^{-1}(S \bmod N!_p) \supseteq \mathcal{I}^N
\numberPre{A}\label{apx:thin}
,\]
in which $\mathcal{I}^N := \times_{n \in \inds} \mathcal{I}_n
\setsub \times_{n \in \inds} \mathcal{K}_n$ and
\newcommand{\nOrgans}{m}
\begin{align*}
&\sum_{\nInK = 0}^{N_* - 2}
(N_*  - 1 - \nInK) \sum_{\begin{smallmatrix}\{i_1, \cdots, i_m\} \subseteq \inds_* :\\ i_1 < \cdots < i_m
\end{smallmatrix}}
\left[
\prod_{n \in \{i_1, \cdots, i_m\}} K_n
\prod_{n \in \inds_*^{-i} } H_n \prod_{n \in \inds^{-i}} I_n
\right] \geq
\prod_{n \in \inds} K_n
\numberPre{A}\label{apx:thin:count}
.\end{align*}
	
	Then 
	\[\ext_{\mu \in PR} \mu(\probset) \rel
	\lim_{N \to \infty}
		\frac{
		\prod_{n \in \inds} I_n}
	{N!_p}
.\]
\end{corollary}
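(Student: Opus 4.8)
The plan is to turn the probability question into a path-counting problem via \res{thm:range:prime} and then reduce to a routing problem whose feasibility is precisely what hypothesis \res{apx:thin:count} certifies. First I would combine \res{apx:optimization:primes} with the inclusion \res{apx:thin}: since multi-set intersection is monotone and $s^{-1}(S \bmod N!_p) \supseteq \mathcal{I}^N$, one gets $\ext_{\mu \in PR}\mu(S) \rel \lim_{N \to \infty} \max_{\mathcal{J}} \card{\mathcal{J} \cap \mathcal{I}^N}/N!_p$, where $\mathcal{J}$ ranges over the multi-sets obeying \res{cnd:multiset}. It then suffices to exhibit, for each $N$, one such $\mathcal{J}$ with $\card{\mathcal{J} \cap \mathcal{I}^N} \geq \prod_{n \in \inds} I_n$; dividing by $N!_p$ and passing to the limit yields the asserted bound.

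Next I would work inside the product box $\times_{n \in \inds} \mathcal{I}_n$. Because $\prod_{k \neq n} I_k \leq \prod_{k \neq n} p_k = N!_p/p_n$ for every $n$, this box sits as a sub-multi-set of some $\mathcal{J}$ obeying \res{cnd:multiset}, the residual projection demand being filled by paths lying outside the box (as in the completion used in \res{lem:redirect:one}). The box contributes $\prod_{n} I_n$ path-slots. Call a path \emph{alive} if it lands in $\mathcal{I}^N$, i.e. if it passes through $\mathcal{H}_n = \mathcal{I}_n \setsub \mathcal{K}_n$ in at least one coordinate $n \in \inds_*$, and \emph{dead} otherwise. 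A box-path is dead exactly when it lies in $\times_{n \in \inds} \mathcal{K}_n$, and there are precisely $\prod_{n \in \inds} K_n$ of these, the \RHS of \res{apx:thin:count}.

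The crux is to convert every dead box-path into an alive one by the coordinate-exchanges of \res{pro:projection} (systematically packaged in \res{pro:redirect}), which preserve all projections $\proj_n(\mathcal{J})$ and hence \res{cnd:multiset}. A single exchange in coordinate $n_*$ between an alive path holding an $\mathcal{H}_{n_*}$-value (the donor) and a dead path holding a $\mathcal{K}_{n_*}$-value (the recipient) makes the recipient alive, and the donor survives provided it keeps another live coordinate; both resulting paths stay inside $\times_{n \in \inds} \mathcal{I}_n$. Thus an alive box-path with $\ell$ live coordinates, necessarily all in $\inds_*$, can donate to $\ell - 1$ dead paths while remaining alive. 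Summing this capacity over all alive box-paths, grouped by the set $\{i_1, \cdots, i_m\} \subseteq \inds_*$ of coordinates on which a path sits in $\mathcal{K}$ (so $\ell = N_* - m$ and the group has $\prod_{n \in \{i_1, \cdots, i_m\}} K_n \prod_{n \in \inds_*^{-i}} H_n \prod_{n \in \inds^{-i}} I_n$ members, each carrying weight $N_* - 1 - m = \ell - 1$), reproduces the \LHS of \res{apx:thin:count} term-by-term. Hypothesis \res{apx:thin:count} therefore reads exactly as: total donation capacity $\geq$ number of dead paths.

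Finally I would run the donations greedily: while a dead box-path remains, pick any donor still holding at least two live coordinates and exchange one of them into that dead path. Each exchange raises the alive-count by one while lowering the remaining capacity by exactly one, so with starting capacity at least $\prod_n K_n$ the process never stalls and converts all dead paths; the resulting $\mathcal{J}$ then has $\prod_{n \in \inds} I_n$ alive slots, giving $\card{\mathcal{J} \cap \mathcal{I}^N} \geq \prod_{n \in \inds} I_n$ and the theorem. I expect the main obstacle to be bookkeeping rather than depth: verifying that the donor–recipient exchange keeps both paths inside $\times_{n \in \inds} \mathcal{I}_n$ and alive, and that the aggregate count of alive box-paths equals the \LHS of \res{apx:thin:count} exactly. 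The feasibility of the greedy assignment, where one might otherwise fear a Hall-type obstruction, is rendered automatic by the one-unit-of-capacity-per-donation accounting, which keeps capacity at least as large as the remaining demand throughout.
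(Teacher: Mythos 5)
Your proposal is correct and follows essentially the same route as the paper's proof: reduce via \res{apx:thin} and \res{apx:optimization:primes} to exhibiting a $\mathcal{J}$ satisfying \res{cnd:multiset} with $\card{\mathcal{J} \cap \mathcal{I}^N} \geq \prod_{n \in \inds} I_n$, then enliven the $\prod_n K_n$ dead paths of $\times_{n}\mathcal{K}_n$ by projection-preserving donations from donors grouped by which $\inds_*$-coordinates lie in $\mathcal{K}$, each with capacity $N_* - 1 - m$, so that \res{apx:thin:count} is exactly the statement that capacity meets demand. The only cosmetic difference is that you complete the box $\times_n \mathcal{I}_n$ to a valid $\mathcal{J}$ while the paper starts from the full product $\times_n \{1, \cdots, p_n\}$ and exhausts donations there; the counting and the exchange mechanism (\res{pro:projection}) are identical.
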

\begin{proof}
	
	Combining Approximations \ref{apx:thin} and \ref{apx:optimization:primes} yields \[\ext_{\mu \in PR} \mu(\probset) \rel 
	\lim_{N \to \infty}
	\max_{\mathcal{J}}\frac{
		\card{
			\mathcal{J}
			\cap \mathcal{I}^N
		}}
	{N!_p}
	\numberPre{A}\label{apx:}
	.\]
	
	By definition of $s^{-1}$ (\ref{def:primes}), for all $n \in \inds$, $\mathcal{I}_n$ and $\mathcal{K}_n$ are subsets of $\{1, \cdots, p_n\}$.  Hence,	$\times_{n \in \inds} \{1, \cdots, p_n\}$ contains $ \cup_{m \in \{0,\cdots, N_* - 2\}, i_1<\cdots<i_m} \times_{n \in \{i_1, \cdots, i_m\}}\mathcal{K}_n \times \times_{n \in \inds_*^{-i}} \mathcal{H}_n \times \times_{n \in \inds^{-i}} \mathcal{I}_n$.

	Every $k \in \times_{n \in \inds} \mathcal{K}_n$ can be enlivened by exchanging its $n$th component with the same of any element $k_* \in \times_{n \in \{i_1, \cdots, i_m\}}\mathcal{K}_n \times \times_{n \in \inds_*^{-i}} \mathcal{H}_n \times \times_{n \in \inds^{-i}} \mathcal{I}_n$ for any $m \in \{0,\cdots, N_* - 2\}$ (in which $\{0, \cdots, \emph{negative number}\} := \{0\}$), $i_1<\cdots<i_m$, and $n \in \inds_*^{-i}$.  For example, if $N = 2$, $\mathcal{I}_1 = 1 = \mathcal{K}_1 = \mathcal{K}_2$, $\mathcal{I}_2 = \{1,2\}$, then $\mathcal{H}_1 = \emptyset$, $\mathcal{H}_2 = \{2\}$, $\mathcal{N}_* = \{2\}$, $N_* - 2 < 0$, and $k = (1, 1)$ is (dead and) can be enlivened by exchanging its second component with the same of $(1, 2) \in \mathcal{I}_1 \times \mathcal{H}_2$.  In this simple case with no overlapping paths, the live and dead paths can be distinguished by thickness (or, alternatively, color) with thick denoting alive:
	 \begin{tikzpicture}[
		  font=\sffamily,
		  every matrix/.style={ampersand replacement=\&,column sep=.5cm,row sep=.1cm},
		  source/.style={draw,thick,rounded corners,fill=yellow!20,inner sep=.3cm},
		  process/.style={draw,thick,circle,fill=yellow!20},
		  to/.style={->,shorten >=1pt,semithick,font=\sffamily\footnotesize},
		  every node/.style={align=center,scale=.8}]
		
\matrix{
		      			\& \node[process] (daq) {2}; 
		      			\\
		  \node[process] (1) {1};\& \node[process] (buffer) {1};
		    \\
		  };
		
\draw[line width = .5mm] (1) -- node[midway,right] {} (daq);
		  \draw[draw=black] (1) -- node[midway,right] {} (buffer);
		\end{tikzpicture}.  
	
	Returning to the general case, every such $k_*$ could make $ N_* - 1 -m $ donations before another exchange would kill it or there were no more recipients in $\times_{n \in \inds} \mathcal{K}_n$, which would only be the case if every path therein had been enlivened.  When inequality \ref{apx:thin:count} is satisfied, the latter is possible.  Beginning with $\times_{n \in \inds} \{1, \cdots, p_n\}$ and exhausting donations yields some $\mathcal{J}$ satisfying \res{cnd:multiset}.  Because the foregoing procedure begins with $\card{I^N}$ lives and performs at least $\prod_{n \in \inds} K_n$ donations, $\mathcal{J}$ satisfies
	\[\card{
			\mathcal{J}
			\cap \mathcal{I}^N
	}
	\geq
	\prod_{n \in \inds}\card{\mathcal{I}_n}
	\numberPre{A}\label{apx:donated}   
	.\]  Combining Approximations \ref{apx:donated} and \ref{apx:} concludes.
\end{proof}

\newcommand{\ctrset}{X}

\section{Application to {\Primes}}
\label{sec:application-to-primes}

For an application of \res{thm:range:prime} or one of its corollaries (\ref{cor:lub:lower:product} and \ref{cor:product:union}) to a given $S$, the approximation of $s^{-1}(S \bmod N!_p)$ for all $N$ is fundamental.  The following propositions concern the intersections of residue classes.  When there is an infinite number of intersections, there is a simple characterization:
\begin{pro}[Infinite Intersections]
	\label{pro:resinf}
	Suppose \begin{enumerate*}\item $\mathcal{I}$ is a set, \item for all $i \in \mathcal{I}$, $j_i,m_i \in \IntsPos$, and \item \label{cnd:unbounded} $\sup_{i \in \mathcal{I}} m_i = \infty$.
		\end{enumerate*}
	Then 
	\[
	\cap_{i \in \mathcal{I}} j_i \bmod m_i
	\numberthis\label{exr:intersect:infinite}
	\] can have at most one element $s$, in which case, for all $i \in \mathcal{I}$, \[j_i \bmod m_i = s \bmod m_i
	.\]
\end{pro}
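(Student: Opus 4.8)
The plan is to establish the uniqueness claim by contradiction and then obtain the congruence condition as a near-immediate consequence. First I would suppose, toward a contradiction, that the intersection \res{exr:intersect:infinite} contains two \emph{distinct} elements $s$ and $t$. By definition of a residue class, membership of both $s$ and $t$ in $j_i \bmod m_i$ means $s \equiv t \pmod{m_i}$, that is, $m_i \mid (s - t)$, and this holds simultaneously for every $i \in \mathcal{I}$. Since $s \neq t$, the integer $s - t$ is nonzero, so any positive divisor of it is at most $|s - t|$; hence $m_i \leq |s - t|$ for all $i$. This shows $\sup_{i \in \mathcal{I}} m_i \leq |s - t| < \infty$, contradicting the standing hypothesis that $\sup_{i \in \mathcal{I}} m_i = \infty$. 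Therefore no two distinct elements can lie in the intersection, so it has at most one element.

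For the second assertion, I would simply unwind membership. If the intersection is the singleton $\{s\}$, then $s \in j_i \bmod m_i$ for every $i \in \mathcal{I}$, which is to say $s \equiv j_i \pmod{m_i}$. Because two residue classes modulo $m_i$ coincide exactly when their representatives are congruent modulo $m_i$, this yields $j_i \bmod m_i = s \bmod m_i$ for all $i$, as required.

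I do not expect a genuine obstacle here; the content is essentially the elementary fact that only finitely many moduli can divide a fixed nonzero difference, which is exactly where the unboundedness hypothesis \res{cnd:unbounded} does its work. The one point meriting a sentence of care is that the phrase ``at most one element'' also subsumes the empty intersection, in which case the ``in which case'' clause is vacuous; thus the two admissible outcomes---empty or singleton---are exactly those left open by the first conclusion, and no further case analysis is needed.
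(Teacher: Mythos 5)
Your proposal is correct and follows essentially the same route as the paper's own proof: the paper argues that gaps between any two elements of the intersection are at least $m_i - 1$ for every $i$, hence unbounded by the hypothesis $\sup_{i \in \mathcal{I}} m_i = \infty$, which is precisely your observation that $m_i \mid (s-t)$ forces $m_i \leq |s-t|$ for all $i$; and the second assertion is handled identically in both, via $s \in j_i \bmod m_i$ iff $j_i \bmod m_i = s \bmod m_i$. Your divisibility phrasing is if anything slightly crisper than the paper's ``gaps'' wording, but it is the same argument.
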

\begin{proof}
	The three suppositions guarantee Set \ref{exr:intersect:infinite} is well defined.  The gaps between elements of (\ref{exr:intersect:infinite}) must be at least $m_i - 1$ for all $i \in \mathcal{I}$, implying the gaps must be arbitrarily close to $\sup_{i \in \mathcal{I}} m_i - 1 = \infty$, by Supposition \ref{cnd:unbounded}.  This excludes the possibility of multiple elements.  One element, say $s$, is possible precisely when, for all $i \in \mathcal{I}$,  $s \in j_i \bmod m_i$.  But for every given $i$, $s \in j_i \bmod m_i$ iff $j_i \bmod m_i = s \bmod m_i$.
\end{proof}

The following proposition examines the nature of individual residue classes within a non-empty intersection thereof.
\begin{pro}[Computation of Shift]
	\label{pro:shift}
	Suppose \begin{itemize}
		\item $N \in \IntsPos$,
		\item $m_1, \cdots, m_N$ are co-prime,
		\item $s_1, \cdots, s_N$ are positive integers,
		\item $k \in \{1, \cdots, N\}$, and
		\item $m_k$ divides the shifts of $\cap_{i=1}^N s_i \bmod m_i$
	.\end{itemize}  Then $s_k \in 0 \bmod m_k$.
\end{pro}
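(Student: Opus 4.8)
The plan is to produce a single integer lying in the intersection and then read the conclusion off the two congruences it is forced to satisfy. The whole proposition reduces to combining ``this integer is congruent to $s_k$ modulo $m_k$'' with ``this integer is divisible by $m_k$.''

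First I would use the coprimality hypothesis. Since $m_1, \cdots, m_N$ are coprime, the Chinese Remainder Theorem applies to the system $z \equiv s_i \pmod{m_i}$ for $i = 1, \cdots, N$, so that system is solvable; equivalently, the intersection $\cap_{i=1}^N s_i \bmod m_i$ is nonempty. Fix any shift $z$ of that intersection. Two facts about $z$ then finish the argument: by the definition of the intersection, $z \in s_k \bmod m_k$, so $z \equiv s_k \pmod{m_k}$; and by the standing hypothesis that $m_k$ divides the shifts of the intersection, $m_k \mid z$, i.e.\ $z \equiv 0 \pmod{m_k}$. Chaining the two gives $s_k \equiv z \equiv 0 \pmod{m_k}$, which is exactly the assertion $s_k \in 0 \bmod m_k$.

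The only step that genuinely requires care --- and the sole place the coprimality hypothesis is spent --- is guaranteeing that a shift $z$ exists at all. If the intersection were empty, the premise ``$m_k$ divides the shifts'' would hold vacuously yet impose nothing on $s_k$, and the conclusion could fail; coprimality is precisely what rules this out, via the Chinese Remainder Theorem. Once nonemptiness is secured, the remainder is immediate from the definitions of residue class and of divisibility, so I do not expect any substantive obstacle beyond correctly invoking coprimality to populate the intersection.
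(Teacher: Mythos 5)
Your proposal is correct and follows essentially the same route as the paper's proof: both invoke the Chinese Remainder Theorem (via coprimality) to guarantee the intersection is a nonempty residue class, and then observe that any element of it lies in both $0 \bmod m_k$ and $s_k \bmod m_k$, forcing $s_k \in 0 \bmod m_k$. The only difference is presentational --- you chain congruences through a fixed element $z$, while the paper states the same fact as nonemptiness of $(0 \bmod m_k) \cap (s_k \bmod m_k)$.
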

\begin{proof}
	By the Chinese Remainder Theorem, $\cap_{i=1}^N s_i \bmod m_i$ is indeed a residue class.
	
	Suppose the shifts of $\cap_{i=1}^N s_i \bmod m_i$ are divisible by $m_k$.  Then 
	\[(0 \bmod m_k) \cap \left(\cap_{i=1}^N s_i \bmod m_i\right) \ne \emptyset.\]  In particular, $(0 \bmod m_k) \cap (s_k \bmod m_k)$ is non-empty.  Hence, $s_k \in 0 \bmod m_k$.
\end{proof}

\propForCor{shift}

\begin{corollary}
	\label{cor:shiftmodco}
	Suppose $s_1, \cdots, s_N$ are integers and for all $i \in \{1, \cdots, N\}$ $s_i \notin 0 \bmod p_i$. Then the shift and modulus of $\cap_{i=1}^N s_i \bmod p_i$ are co-prime.
\end{corollary}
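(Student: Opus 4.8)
The plan is to read this off from \res{pro:shift} by contraposition. First I would record the structural facts. Since $p_1, \dots, p_N$ are the first $N$ primes, they are pairwise co-prime, so the Chinese Remainder Theorem guarantees that $\cap_{i=1}^N s_i \bmod p_i$ is a single residue class modulo $N!_p = \prod_{i=1}^N p_i$, whose shift I will call $s$. Because the modulus $N!_p$ is a product of \emph{distinct} primes, $s$ and $N!_p$ are co-prime precisely when no $p_i$ divides $s$; this reduces the claim to showing $p_i \nmid s$ for every $i \in \{1, \dots, N\}$.

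Next I would fix an arbitrary $i$ and invoke the contrapositive of \res{pro:shift}. That proposition says that if $p_i$ divides the shift of $\cap_{k=1}^N s_k \bmod p_k$, then $s_i \in 0 \bmod p_i$; equivalently, if $s_i \notin 0 \bmod p_i$ then $p_i$ does not divide the shift. Since the hypothesis of the corollary gives exactly $s_i \notin 0 \bmod p_i$, this yields $p_i \nmid s$. As $i$ was arbitrary, no $p_i$ divides $s$, and combining with the previous paragraph gives that the shift $s$ and the modulus $N!_p$ are co-prime.

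There is one bookkeeping mismatch to address: \res{pro:shift} is stated for positive integers $s_1, \dots, s_N$, whereas here the $s_i$ are arbitrary integers. This is harmless, since only the residue classes $s_i \bmod p_i$ enter the argument; I would simply replace each $s_i$ by its least positive representative in $s_i \bmod p_i$, which leaves both the intersection $\cap_{i=1}^N s_i \bmod p_i$ and the hypothesis $s_i \notin 0 \bmod p_i$ unchanged. I do not expect any genuine obstacle here: the content is entirely carried by \res{pro:shift}, and the corollary is essentially its contrapositive packaged with the elementary observation that co-primality to a squarefree modulus is the same as non-divisibility by each of its prime factors.
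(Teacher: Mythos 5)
Your proof is correct and is essentially the paper's argument: both reduce co-primality to the observation that any common divisor of the shift and $N!_p$ exceeding $1$ would force some $p_k$ to divide the shift, and then apply \res{pro:shift} (you via its contrapositive, the paper via contradiction) to rule this out using $s_k \notin 0 \bmod p_k$. Your closing remark about replacing each $s_i$ by a positive representative handles a hypothesis mismatch (integers vs.\ positive integers) that the paper's own proof silently glosses over, but this is a minor tidying rather than a different route.
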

\begin{proof}
	Suppose, contrarily, that the shift and modulus of $\cap_{i=1}^N s_i \bmod p_i$ have a common divisor greater than 1, it is equal to $p_k$ for some $k \in \{1, \cdots, N\}$.  Then the previous proposition gives $s_k \in 0 \bmod p_k$.
\end{proof}

Finally a computation of $s^{-1}$:

\begin{lem}[Primes in the Intersection of Residue Classes]
	\label{lem:intersect:nonempty}
	For all $N \in \IntsPos$, \[
	s^{-1}\left(\mathbb{N}_p \bmod N!_p\right)
	\supseteq
	\times_{n \in \inds}
	\{
	1, \cdots, p_n-1
	\} 	
	.\]
\end{lem}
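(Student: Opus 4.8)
The plan is to reduce the containment to a single appeal to Dirichlet's theorem on primes in arithmetic progressions. First I would fix $N \in \IntsPos$ and an arbitrary tuple $j = (j_1, \ldots, j_N) \in \times_{n \in \inds} \{1, \ldots, p_n - 1\}$, and write $s := s(j)$ for its shift. By \res{def:primes} (which invokes the Chinese Remainder Theorem), $s$ is the unique element of $\{1, \ldots, N!_p\}$ satisfying $s \equiv j_n \pmod{p_n}$ for every $n \in \inds$.

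Next I would translate the combinatorial hypothesis into a coprimality statement. Because $1 \leq j_n \leq p_n - 1$, we have $j_n \not\equiv 0 \pmod{p_n}$, and hence $s \equiv j_n \not\equiv 0 \pmod{p_n}$; that is, $p_n \nmid s$ for all $n \in \inds$. Since $N!_p = \prod_{i=1}^N p_i$ is squarefree with prime divisors exactly $p_1, \ldots, p_N$, it follows that $\gcd(s, N!_p) = 1$. Dirichlet's theorem then supplies a prime $q$ with $q \equiv s \pmod{N!_p}$, whence $s \in q \bmod N!_p \subseteq \mathbb{N}_p \bmod N!_p$. Thus $s(j) \in \mathbb{N}_p \bmod N!_p$, i.e., $j \in s^{-1}(\mathbb{N}_p \bmod N!_p)$; as $j$ was arbitrary, the containment follows.

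The bookkeeping here is routine: beyond the CRT characterization of the shift, the only content is the implication from $\gcd(s, N!_p) = 1$ to the existence of a prime in the residue class $s \bmod N!_p$. This is precisely the existence half of Dirichlet's theorem for the modulus $N!_p$, and it is the sole non-elementary ingredient; I do not expect to circumvent it, since for squarefree moduli the lemma is equivalent to that theorem. Accordingly I anticipate a short write-up whose entire weight rests on the citation of Dirichlet, the combinatorial condition $j_n \in \{1, \ldots, p_n - 1\}$ serving only to guarantee coprimality to $N!_p$.
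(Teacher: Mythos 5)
Your proof is correct and follows essentially the same route as the paper's: establish that the shift of $\cap_{n \in \inds} j_n \bmod p_n$ is coprime to the modulus $N!_p$, then invoke Dirichlet's theorem to produce a prime in that residue class. The only difference is presentational—you argue coprimality directly from $p_n \nmid s$ and squarefreeness of $N!_p$, whereas the paper cites its Corollary~\ref{cor:shiftmodco} (itself proved from Proposition~\ref{pro:shift}) for exactly this step.
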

\begin{proof}
	If, for all $n \in \inds$, $j_n \in \{1, \cdots, p_n - 1\}$, then, by \res{cor:shiftmodco}, the shift and modulus of $\cap_{i=1}^N j_i \bmod p_i $ are co-prime.  Hence, by Dirichlet's theorem on the distribution of primes in residue classes, $\cap_{i=1}^N j_i \bmod p_i  \cap \mathbb{N}_p \ne \emptyset$.
\end{proof}

\begin{thm}[Probability Range of Primes]
$\{\mu(\mathbb{N}_p) : \mu \in PR\} = [0, 1/2]$.

\label{thm:primesRan}
\end{thm}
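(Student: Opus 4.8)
The plan is to reduce the set equality to a single supremum computation. By Theorem~2 of \citet{kad}, the range $\{\mu(\Primes) : \mu \in PR\}$ is a closed interval, and its left endpoint is already pinned down: \res{pro:lower:primes} gives $\inf_{\mu \in PR}\mu(\Primes) = 0$. Hence it suffices to show that $\sup_{\mu \in PR}\mu(\Primes) = 1/2$, which I would establish by proving the two matching inequalities separately.

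For the upper bound $\sup_{\mu \in PR}\mu(\Primes) \leq 1/2$, I would use the elementary fact that $2$ is the only even prime, so that $\Primes \setsub \{2\} \subseteq 1 \bmod 2$. Membership in $PR$ forces $\mu(1 \bmod 2) = 1/2$ (as $2$ is prime), while Proposition~\ref{new pro} makes every singleton null, in particular $\mu\{2\} = 0$. Finite additivity and monotonicity then give, for every $\mu \in PR$,
\[
\mu(\Primes) = \mu(\Primes \setsub \{2\}) + \mu\{2\} \leq \mu(1 \bmod 2) + 0 = 1/2.
\]

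For the reverse inequality $\sup_{\mu \in PR}\mu(\Primes) \geq 1/2$, I would feed the number-theoretic approximation of \res{lem:intersect:nonempty} (a Dirichlet-theorem consequence routed through \res{cor:shiftmodco}), namely $s^{-1}(\Primes \bmod N!_p) \supseteq \times_{n \in \inds}\{1, \cdots, p_n - 1\}$ valid for every $N$, into \res{cor:lub:lower:product} with the choice $\mathcal{I}_n := \{1, \cdots, p_n - 1\}$. The corollary then delivers
\[
\sup_{\mu \in PR}\mu(\Primes) \geq \inf_{n \in \IntsPos}\frac{\card{\mathcal{I}_n}}{p_n} = \inf_{n \in \IntsPos}\frac{p_n - 1}{p_n} = \frac{1}{2},
\]
the last infimum being attained at the smallest prime $p_1 = 2$, since $1 - 1/p_n$ strictly increases in $p_n$.

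Combining the two displays shows the supremum is exactly $1/2$; together with the infimum $0$ and the closed-interval property this forces the range to be precisely $[0, 1/2]$. I expect no step of this final argument to be a genuine obstacle, since the heavy combinatorial work was already absorbed into \res{thm:range:prime} and its corollaries, and the arithmetic input into \res{lem:intersect:nonempty}. The only points worth double-checking are that \res{cor:lub:lower:product} is applied with its hypothesis verified uniformly over \emph{all} $N$ (which \res{lem:intersect:nonempty} supplies), and that the infimum $\inf_n (p_n-1)/p_n$ is realized at $p_1 = 2$ rather than approached in the limit $p_n \to \infty$ (where it would tend to $1$), so that the bound lands exactly on $1/2$ and matches the upper estimate.
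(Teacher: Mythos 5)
Your proposal is correct and takes essentially the same route as the paper's own proof: the lower bound on the supremum via \res{cor:lub:lower:product} applied with \res{lem:intersect:nonempty} and $\mathcal{I}_n := \{1, \cdots, p_n - 1\}$ (infimum attained at $p_1 = 2$), the upper bound from the inclusion of {\Primes} in $(1 \bmod 2) \cup \{2\}$, and the endpoint $0$ from \res{pro:lower:primes}. The only difference is presentational: you spell out the finite-additivity and monotonicity steps ($\mu\{2\} = 0$ via \res{new pro}) and the closed-interval appeal to Theorem 2 of \citet{kad}, which the paper leaves implicit.
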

\begin{proof}
	 By \res{cor:lub:lower:product}, with \res{apx:shifts:product} given by \res{lem:intersect:nonempty} and $\mathcal{I}_n := \{1, \cdots, p_n - 1\}$,
	\[
	\sup_{\mu \in PR} \mu(S) \geq \inf_{i \in \IntsPos} \frac{\card{\mathcal{I}_i}}{p_i} = \inf_{i \in \IntsPos} \frac{p_i-1}{p_i} = \frac{1}{2}
	,\]
	with $i = 1$ attaining the infimum.

$\sup_{\mu \in PR} \mu(S) \leq 1/2$ by virtue of the primes inclusion in the odd numbers union $\{2\}$.  Therefore the least upper bound is $1/2$.  The greatest lower bound of $0$ is the content of \res{pro:lower:primes}.
\end{proof}

\section{Application to Residue Classes}\label{sec:app:res}

Because the primes have zero mass under $R$ \citep{kad}, we know a priori that there must be a residue class on which $PR$ and $R$ can disagree. It turns out the residue classes that have non-singleton bounds under $PR$ are those mod neither 1 nor a prime; that is all residue classes on which $PR$ was not initially defined!

The following two lemmas instantiate Approximations \ref{apx:shifts:product} and \ref{apx:thin}, respectively.

\newcommand{\indS}{\mathcal{I}^{S}_n}

\begin{lem}[Intersection of Residue Classes]
	\label{lem:intersectNonempty:res}
	 Let $r,m \in \IntsPos$ and $S := r \bmod m$ be an arbitrary residue class.  For all $n \in \IntsPos$, let $\indS := \{1 : p_n\}$ if $p_n \ndiv m$ and $r \bmod p_n \cap \{1 : p_n\}$ otherwise.  For all $N \in \IntsPos$,
	\[
	s^{-1}(S) \supseteq \times_{n \in \inds} \indS
	\]
\end{lem}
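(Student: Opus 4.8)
The plan is to describe the set of residues modulo $N!_p$ that $S$ occupies, and then to check directly that each tuple in the product $\times_{n \in \inds} \indS$ is sent into that set by $s$. Throughout I read $s^{-1}(S)$ as $s^{-1}(S \bmod N!_p)$, consistent with \res{apx:shifts:product}, so that the claim to be established is: every $j \in \times_{n \in \inds} \indS$ satisfies $s(j) \in S \bmod N!_p$.

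First I would characterize membership in $S \bmod N!_p$. An integer $x$ lies in $S \bmod N!_p = \cup_{a \in S} a \bmod N!_p$ exactly when the simultaneous congruences $a \equiv r \pmod m$ and $a \equiv x \pmod{N!_p}$ admit a common solution $a$. By the standard solvability criterion for a pair of congruences, such an $a$ exists iff $x \equiv r \pmod{d}$, where $d := \gcd(m, N!_p)$. Since $N!_p = \prod_{i=1}^N p_i$ is squarefree, $d = \prod\{p_n : n \in \inds,\ p_n \divd m\}$, and by the Chinese Remainder Theorem the condition $x \equiv r \pmod d$ is equivalent to the conjunction of $x \equiv r \pmod{p_n}$ over all $n \in \inds$ with $p_n \divd m$. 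In particular, the primes $p_n \ndiv m$ impose no constraint whatsoever on membership in $S \bmod N!_p$---which is precisely why $\indS$ is allowed to be all of $\{1:p_n\}$ in that case.

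Next I would fix $j \in \times_{n \in \inds} \indS$ and verify $s(j) \in S \bmod N!_p$ using the characterization above. By \res{def:primes}, $s(j)$ is the unique representative in $\{1, \cdots, N!_p\}$ with $s(j) \equiv j_n \pmod{p_n}$ for every $n \in \inds$. So it suffices to check, for each $n \in \inds$ with $p_n \divd m$, that $j_n \equiv r \pmod{p_n}$. But for such $n$ the definition gives $\indS = r \bmod p_n \cap \{1:p_n\}$, the single representative of $r$ modulo $p_n$ inside the complete residue system $\{1:p_n\}$; hence $j_n \equiv r \pmod{p_n}$, and therefore $s(j) \equiv j_n \equiv r \pmod{p_n}$. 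Running this over all $n \in \inds$ dividing $m$ yields $s(j) \equiv r \pmod d$, so $s(j) \in S \bmod N!_p$ and $j \in s^{-1}(S \bmod N!_p)$, as desired.

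The only genuine content is the first step---the characterization of $S \bmod N!_p$ through the solvability criterion for paired congruences---and even that is routine number theory; the remainder is bookkeeping with the Chinese Remainder Theorem and the definition of $s$. I expect the main thing to watch is the clean separation of the two cases $p_n \divd m$ and $p_n \ndiv m$, ensuring that the unconstrained coordinates genuinely contribute nothing to the membership condition while the constrained ones each force the correct residue.
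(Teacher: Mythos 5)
Your proof is correct, and it reaches the result by a somewhat different route than the paper. The paper fixes $j$ in the product and works entirely with intersections of residue classes: for coordinates with $p_n \mid m$ it replaces $j_n \bmod p_n$ by $r \bmod p_n$, absorbs those classes into $S$ (using $S = r \bmod m \subseteq r \bmod p_n$), and then applies the Chinese Remainder Theorem to the remaining pairwise coprime moduli ($m$ together with the $p_n \nmid m$) to conclude $\cap_{n \in \mathcal{N}}\, j_n \bmod p_n \cap S \neq \emptyset$, which is the (left implicit) membership criterion for $s^{-1}(S \bmod N!_p)$; your reading of $s^{-1}(S)$ as $s^{-1}(S \bmod N!_p)$ is indeed the intended one, since that is how the lemma is consumed by \res{apx:shifts:product} in \res{thm:resRan}. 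You instead first establish an explicit characterization of $S \bmod N!_p$ through the solvability criterion for a pair of congruences with non-coprime moduli, namely $x \in S \bmod N!_p$ iff $x \equiv r \pmod{\gcd(m, N!_p)}$, and then verify that congruence coordinate-by-coordinate for $s(j)$. Both arguments rest on the same structural fact---coordinates at primes dividing $m$ are pinned to $r$'s residue while the others are unconstrained---but they package it differently: your gcd criterion is an iff, so it silently delivers the stronger conclusion that $s^{-1}(S \bmod N!_p)$ \emph{equals} $\times_{n \in \mathcal{N}} \mathcal{I}^S_n$ rather than merely contains it, whereas the paper's one-directional absorption argument is shorter and stays inside the intersection-of-residue-classes framework it reuses in \res{lem:intersect:nonempty} and \res{lem:intersectNonempty:union}. (A minor point in your favor: the paper justifies its absorption step with the inclusion ``$r \bmod p_n \subseteq S$'', which is stated backwards---it should be $S \subseteq r \bmod p_n$---while your verification keeps the directions straight.)
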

\begin{proof}
	For all $j \in \times_{n \in \inds} \indS$, 
	\begin{align*}
	\cap_{n \in \inds} j_n \bmod p_n \cap S 
	&= \cap_{n \in \inds: p_n | m}  j_n  \bmod p_n \cap \cap_{n \in \inds:\neg p_n | m} j_n \bmod p_n \cap S \\&= \cap_{n \in \inds: p_n | m} r  \bmod p_n \cap \cap_{n \in \inds:\neg p_n | m}  j_n \bmod p_n \cap S \\&= \cap_{n \in \inds:\neg p_n | m} j_n \bmod p_n \cap S
	\numberPre{}\label{exr:nondivisor}
	,\end{align*} the last equality holds because $r \bmod p_n \subseteq S$ for all $p_n$ dividing $m$.  By the Chinese Remainder Theorem, set \ref{exr:nondivisor} is non-empty.
\end{proof}

\begin{lem}[Intersecting a Residue Class Union]
	\label{lem:intersectNonempty:union}
	Suppose $r,m \in \IntsPos$ is such that $m$ is made up of two (not necessarily unique) prime factors $m_1, m_2$.  Let $S := \Ints \setsub r \bmod m$.  For all $N \in \IntsPos$,
		\[
		s^{-1}\left( S \bmod N!_p \right)
		\supseteq 
		\left[\times_{n \in \inds} \{1 : p_n\}\right] 
		\setsub
		\left[
		(\times_{n \in \inds \setsub \{i_1, i_2\}} \{1 : p_n\}) \times \times_{n \in \{i_2, i_2\}} r \bmod p_n
		\right],
		\numberPre{A}\label{apx:shifts:union}
		\]
	in which $i_1$ and $i_2$ are such that $p_{i_1} = m_1$ and $p_{i_2} = m_2$.
	
	If, further, $i_1 = i_2$,
	\[
	s^{-1}\left( S \bmod N!_p \right)
	\supseteq 
	\times_{n \in \inds} \{1 : p_n\},
	\numberPre{A}\label{apx:shifts:one}
	\]
\end{lem}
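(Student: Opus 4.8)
The plan is to translate membership in $s^{-1}(S \bmod N!_p)$ into a statement about residue classes and then read it off from the Chinese Remainder Theorem. For a path $j \in \times_{n \in \inds} \{1 : p_n\}$, set $C_j := \cap_{n=1}^N j_n \bmod p_n$; by the Chinese Remainder Theorem this is a single residue class modulo $N!_p$, with shift $s(j)$, so $C_j = \{s(j) + k\,N!_p : k \in \Ints\}$. Exactly as in \res{lem:intersectNonempty:res}, $j \in s^{-1}(S \bmod N!_p)$ precisely when $C_j \cap S \ne \emptyset$; and since $S = \Ints \setsub r \bmod m$, this is the same as $C_j \not\subseteq r \bmod m$. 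Both claimed inclusions therefore reduce to producing, for each path $j$ on the relevant right-hand side, an element of $C_j$ lying outside $r \bmod m$.

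For \res{apx:shifts:union} I would restrict to $N \ge \max\{i_1, i_2\}$ — the only range relevant to the later $N \to \infty$ limit, and the only one in which the excluded product is even indexed — and exploit that $m = p_{i_1} p_{i_2}$ with $p_{i_1}, p_{i_2}$ distinct. By the Chinese Remainder Theorem, $z \equiv r \pmod m$ iff $z \equiv r \pmod{p_{i_1}}$ and $z \equiv r \pmod{p_{i_2}}$, while every $z \in C_j$ obeys $z \equiv j_{i_1} \pmod{p_{i_1}}$ and $z \equiv j_{i_2} \pmod{p_{i_2}}$. A path off the excluded product satisfies $j_{i_1} \not\equiv r \pmod{p_{i_1}}$ or $j_{i_2} \not\equiv r \pmod{p_{i_2}}$; in either case no element of $C_j$ is congruent to $r$ modulo the offending prime, hence none lies in $r \bmod m$. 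Thus $C_j \subseteq S$, which gives the inclusion.

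For \res{apx:shifts:one}, $m = p_{i_1}^2$, and the point is that $N!_p$ is squarefree, so $p_{i_1}^2 \nmid N!_p$ and therefore $N!_p \not\equiv 0 \pmod{p_{i_1}^2}$ for every $N$. Consequently the two elements $s(j)$ and $s(j) + N!_p$ of $C_j$ differ by a quantity that is not a multiple of $m = p_{i_1}^2$, so they cannot both be congruent to $r$ modulo $m$; at least one lies in $S$. This holds for every path $j$ with no exclusion, yielding $s^{-1}(S \bmod N!_p) \supseteq \times_{n \in \inds} \{1 : p_n\}$.

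The substantive step is the second one: it is precisely the squarefreeness of $N!_p$ — the fact that a residue class modulo $N!_p$ can never be trapped inside a residue class modulo a prime square — that strengthens the ``product minus a product'' bound of \res{apx:shifts:union} into the full product of \res{apx:shifts:one}. Everything else is routine bookkeeping with the Chinese Remainder Theorem, together with the convention, inherited from \res{lem:intersectNonempty:res}, that identifies $s^{-1}(S \bmod N!_p)$ with the paths whose residue class meets $S$.
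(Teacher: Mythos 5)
Your proof is correct, and on the second inclusion it takes a genuinely different---and in fact sounder---route than the paper. For \res{apx:shifts:union} the two arguments coincide in essence: writing $C_j := \cap_{n \in \inds}\, j_n \bmod p_n$, a path $j$ off the excluded product has $j_{i_*} \not\equiv r \pmod{p_{i_*}}$ for some $i_* \in \{i_1, i_2\}$, so $C_j$ is disjoint from $r \bmod m \subseteq r \bmod p_{i_*}$, hence contained in $S$ and nonempty by the Chinese Remainder Theorem; this is exactly the paper's absorption step leading to \res{apx:absorb}, merely phrased through the factorization of the congruence $z \equiv r \pmod{m}$ rather than through set inclusion. (Your restriction to $N \geq \max\{i_1, i_2\}$ is a reasonable repair of an indexing issue that the paper passes over silently.)

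For \res{apx:shifts:one}, however, the paper merely asserts that \res{apx:absorb}, i.e. $C_j \subset S$, still holds for every path $j$ when $i_1 = i_2$. That assertion is false: take $m = 4$, $r = 1$, $N = 2$, $j = (1,1)$; then $C_j = 1 \bmod 6$ meets $1 \bmod 4$ (at $1, 13, \dots$), so $C_j \not\subseteq S$, even though $7 \in C_j \cap S$. What the conclusion actually needs is only $C_j \cap S \ne \emptyset$, and that is precisely what your squarefreeness argument establishes: since $p_{i_1}^2$ does not divide the squarefree number $N!_p$, the elements $s(j)$ and $s(j) + N!_p$ of $C_j$ cannot both lie in $r \bmod p_{i_1}^2$. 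So your proof does not just differ from the paper's; it supplies the argument that the paper's one-line treatment of the case $i_1 = i_2$ requires but does not give, and it does so for every $N$ without any restriction.
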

\begin{proof}
	Fix an element $j$ in the {\RHS} 
	\[
	\left[\times_{n \in \inds} \{1 : p_n\}\right] \setsub
	\left[
	(\times_{n \in \inds \setsub \{i_1, i_2\}} \{1 : p_n\}) \times \times_{n \in \{i_2, i_2\}} r \bmod p_n
	\right]
	\numberPre{}\label{exr:product:sub}
	\] of \res{apx:shifts:union}.
	Then, if $i_1 \ne i_2$, for some $i_* \in \{i_1, i_2\}$, \[S_* := j_{i_*} \bmod p_{i_*} \subset S
	\numberPre{}\label{exr:divide:inclusion}
	.\]
	In that case,
	\begin{align*}
	\cap_{n \in \inds} j_n \bmod p_n \cap S 
	&= \cap_{n \in \{i_1, i_2\}}j_n \bmod p_n \cap \cap \cap_{n \in \inds \setsub \{i_1,i_2\}} j_n \bmod p_n \cap S
	\\&=
	(S_* \cap S) \cap
	\cap_{n \in \{i_1, i_2\}\setsub\{i_*\}}j_n \bmod p_n \cap \cap_{n \in \inds \setsub \{i_1,i_2\}} j_n \bmod p_n
	\\&=
	 \cap_{n \in \{i_1, i_2\}}j_n \bmod p_n \cap \cap_{n \in \inds \setsub \{i_1,i_2\}} j_n \bmod p_n
,\end{align*} the latter equality by \res{exr:divide:inclusion}.  In summary,
	\[
	\cap_{n \in \inds} j_n \bmod p_n \subset S
	\numberPre{A}\label{apx:absorb}
	\]
	By the Chinese Remainder Theorem, the {\LHS} of \res{apx:absorb} is non-empty.  In particular, $S \bmod N!_p \cap \cap_{n \in \inds} j_n \bmod p_n \ne \emptyset$. Because $j$ was an arbitrary element of set \ref{exr:product:sub}, \res{apx:shifts:union} holds.
	
	If, instead, $r = m_1^2$ and $j$ in the {\RHS} of \res{apx:shifts:one}, then \res{apx:absorb} still holds.
\end{proof}

\begin{thm}[Probability Range of Residue Class]
\label{thm:resRan}
For every $r \in \Ints$ and $m \in \IntsPos \setsub$ $\{p_0, p_1, \cdots\}$,
\[\{\mu\left(r \bmod m\right): \mu \in PR\} = \left[0, \frac{1}{\max\{p \in \mathbb{N}_p: p|m\}}\right]
.\]
\end{thm}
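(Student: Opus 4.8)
The plan is to locate the two endpoints of the range, which by the closed-interval property of probability ranges (Theorem 2 of \citet{kad}, quoted at the start of \res{sec:suprema-of-probabilities}) is exactly $[\inf_{\mu \in PR}\mu(r \bmod m),\ \sup_{\mu \in PR}\mu(r \bmod m)]$. Writing $y := \max\{p \in \mathbb{N}_p : p \mid m\}$ for the largest prime factor of $m$, it therefore suffices to prove $\sup_{\mu \in PR}\mu(r \bmod m) = 1/y$ and $\inf_{\mu \in PR}\mu(r \bmod m) = 0$. The whole argument parallels the proof of \res{thm:primesRan}: a set inclusion bounds the supremum from above, while the corollaries of \res{sec:suprema-of-probabilities} supply the matching lower bound and the vanishing infimum.

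For the upper bound on the supremum, observe that $y \mid m$ forces $r \bmod m \subseteq r \bmod y$; since $y$ is prime, $\mu(r \bmod y) = 1/y$ for every $\mu \in PR$, so $\mu(r \bmod m) \leq 1/y$. For the reverse inequality I would apply \res{cor:lub:lower:product} to $S := r \bmod m$, using the product approximation $\times_{n \in \inds}\mathcal{I}^S_n$ furnished by \res{lem:intersectNonempty:res}. A residue class modulo $p_n$ meets the complete residue system $\{1:p_n\}$ in exactly one point, so $\card{\mathcal{I}^S_n} = 1$ when $p_n \mid m$ and $\card{\mathcal{I}^S_n} = p_n$ otherwise; hence $\inf_{n \in \IntsPos}\card{\mathcal{I}^S_n}/p_n = \min_{p_n \mid m} 1/p_n = 1/y$, and \res{cor:lub:lower:product} gives $\sup_{\mu \in PR}\mu(r \bmod m) \geq 1/y$. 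The two bounds coincide, so the supremum is $1/y$.

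For the infimum I would pass to the complement: finite additivity gives $\mu(\Ints \setsub r \bmod m) = 1 - \mu(r \bmod m)$, so $\inf_{\mu}\mu(r \bmod m) = 1 - \sup_{\mu}\mu(\Ints \setsub r \bmod m)$, and it is enough to show $\sup_{\mu}\mu(\Ints \setsub r \bmod m) = 1$. Since $m$ is neither $1$ nor prime, it has at least two prime factors counted with multiplicity; pick two, $m_1$ and $m_2$, and set $m' := m_1 m_2 \mid m$. Then $r \bmod m \subseteq r \bmod m'$, so $\Ints \setsub r \bmod m \supseteq \Ints \setsub r \bmod m'$ and $\sup_{\mu}\mu(\Ints \setsub r \bmod m) \geq \sup_{\mu}\mu(\Ints \setsub r \bmod m')$; it thus suffices to drive the latter supremum to $1$. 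Applying \res{lem:intersectNonempty:union} to $S := \Ints \setsub r \bmod m'$, the case $m_1 = m_2$ yields via \res{apx:shifts:one} the full product $\times_{n \in \inds}\{1:p_n\}$, on which \res{cor:lub:lower:product} alone returns the bound $\inf_n p_n/p_n = 1$; the case $m_1 \ne m_2$ yields the difference-of-products approximation \res{apx:shifts:union}, to be fed into \res{cor:product:union}.

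The delicate step, and the one I expect to be the main obstacle, is verifying the hypothesis \res{apx:thin:count} of \res{cor:product:union} in the distinct-primes case. Here $\mathcal{I}_n = \{1:p_n\}$ for every $n$, while $\mathcal{K}_n = r \bmod p_n \cap \{1:p_n\}$ is a singleton for $n \in \{i_1,i_2\}$ (with $p_{i_1}=m_1$, $p_{i_2}=m_2$) and equals $\{1:p_n\}$ otherwise, so $\inds_* = \{i_1,i_2\}$, $N_* = 2$, and the outer sum in \res{apx:thin:count} collapses to its single $m=0$ term, namely $(p_{i_1}-1)(p_{i_2}-1)\prod_{n \notin \{i_1,i_2\}} p_n$. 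The right-hand side $\prod_{n \in \inds}K_n$ equals $\prod_{n \notin \{i_1,i_2\}} p_n$, so the inequality reduces to $(m_1-1)(m_2-1) \geq 1$, which holds because $m_1, m_2 \geq 2$. Then \res{cor:product:union} gives $\sup_{\mu}\mu(\Ints \setsub r \bmod m') \geq \lim_{N}\prod_{n \in \inds}I_n/N!_p = \lim_N N!_p/N!_p = 1$, forcing equality and hence $\inf_{\mu}\mu(r \bmod m) = 0$. Everything outside this count is routine monotonicity and bookkeeping against the established corollaries.
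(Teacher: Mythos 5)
Your proposal is correct and follows essentially the same route as the paper's own proof: the same two-sided argument for the supremum (the inclusion $r \bmod m \subseteq r \bmod y$ for the upper bound, and \res{cor:lub:lower:product} with \res{lem:intersectNonempty:res} for the lower bound), and the same complement-plus-two-prime-factor reduction for the infimum, split into the cases $m_1 = m_2$ (via \res{apx:shifts:one} and \res{cor:lub:lower:product}) and $m_1 \ne m_2$ (via \res{apx:shifts:union} fed into \res{cor:product:union}, with the identical instantiation of \res{apx:thin:count}). Your explicit reduction of that counting inequality to $(m_1 - 1)(m_2 - 1) \geq 1$ is, if anything, slightly more transparent than the paper's terse ``matching'' remark, but the substance coincides throughout.
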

\begin{proof}
	With \res{apx:shifts:product} given by \res{lem:intersectNonempty:res} and $\mathcal{I}_n := \mathcal{I}_n^S$, by \res{cor:lub:lower:product},
	\[
	\sup_{\mu \in PR} \mu(S) \geq \inf_{n \in \IntsPos} \frac{\card{\mathcal{I}_n^S}}{p_n} = \inf_{n \in \IntsPos: p_n | m} \frac{1}{p_n} = \frac{1}{\max\{p \in \mathbb{N}_p: p|m\}}
	.\]
	
	For $p_n$ dividing $m$, $\mu(S)$ can be no larger than the measure $\mu\left(r \bmod p_n\right)$ of a set containing it, so 
	\[
	\sup_{\mu \in PR} \mu(S) = \frac{1}{\max\{p \in \mathbb{N}_p: p|m\}}
	.\]

	It remains to show merely 
	\[\inf_{\mu \in PR} \mu(S) = 0
	\numberPre{Id}\label{idn:zero}.\]
	
	It suffices to consider $m$ with at most two prime factors, as every residue class of a modulus with more prime factors is a subset of some class with two; and thus has probability at most that of the superset.  Classes with one prime factor are defined.  Therefore, it suffices to consider moduli with exactly two prime factors $m_1$ and $m_2$.
	
	Since our theory is based on suprema rather than infima, consider the set complement $S^c$, itself a union of residue classes.  Claimed \res{idn:zero} is equivalent to $\sup_{\mu \in PR} \mu(S^c) = 1$, which in turn is equivalent to \[
	\sup_{\mu \in PR} \mu(S^c) \geq 1 \numberPre{A}\label{apx:union}\]
	by definition of finitely additive probability.

	If $m_1 = m_2$, then by \res{cor:lub:lower:product},
	with \res{apx:shifts:product} given by \res{apx:shifts:one} and $\mathcal{I}_n := \{1, \cdots, p_n\}$, 
\[
\sup_{\mu \in PR} \mu(S^c) \geq \inf_{n \in \IntsPos} \frac{\card{\{1, \cdots,p_n\}}}{p_n} = \inf_{n \in \IntsPos} \frac{p_n}{p_n} = 1
.\]	

	Now consider $m_1 \ne m_2$.  Fix $N \in \IntsPos$.  Let, for all $n \in \inds$, $\mathcal{I}_n := \{1 : p_n\}$ and \[\mathcal{K}_n :=
	\begin{cases}  
	 \{1 : p_n\} & n \in \inds \setsub \{i_1, i_2\} 
	 \\
	 r \bmod p_n \cap \{1 : p_n\} & n \in \{i_2, i_2\}
	 \end{cases},\]
	  in which $i_1$ and $i_2$ are such that $p_{i_1} = m_1$ and $p_{i_2} = m_2$.  (Compare to the setup of \res{lem:intersectNonempty:union}.)
	  Then
	  \[
	  \card{\mathcal{K}_n} = \begin{cases}
	  \card{\mathcal{I}_n} & n \in \inds \setsub \{i_1, i_2\}
	  \\
	  1 				   & n \in \{i_1, i_2\} 
	  \end{cases}
	  \numberPre{Id}\label{idn:toxic}
	  \]
	  and $\inds_* := \{n \in \inds : \mathcal{I}_n \setsub \mathcal{K}_n \ne \emptyset\} = \{i_1, i_2\}$.  Plugging $N_* = \card{\{i_1, i_2\}} = 2$ into the {\LHS} of \ref{apx:thin:count} yields
	\begin{align*}
	&\sum_{\nInK = 0}^{0}
	(2  - 1 - \nInK) \sum_{\{i_1, \cdots, i_m\} \subseteq \inds_* : i_1 < \cdots < i_m} 
	\left[
	\prod_{n \in \emptyset} K_n
	\prod_{n \in \inds_* } \card{\mathcal{I}_n \setsub \mathcal{K}_n} \prod_{n \in \inds \setsub \inds_*} \card{\mathcal{I}_n}
	\right]
	\\
	=&
	\prod_{n \in \inds_* } \card{\mathcal{I}_n \setsub \mathcal{K}_n} \prod_{n \in \inds \setsub \inds_*} \card{\mathcal{I}_n}
	,\end{align*}
	matching \res{idn:toxic} and thereby satisfying inequality \ref{apx:thin:count}.
	
	Therefore, with \res{apx:thin} given by (i) \res{apx:shifts:union} of \res{lem:intersectNonempty:union} and (ii) $S^c$ in place of $S$, by \res{cor:product:union},
	\begin{align*}
	\ext_{\mu \in PR} \mu(\probset) &\rel
	\lim_{N \to \infty}
	\frac{
		\prod_{n \in \inds}\card{\mathcal{I}_n}}
	{N!_p}
	\\&=
	\lim_{N \to \infty}
	\frac{
		\prod_{n \in \inds} p_n}
	{N!_p}	
	\\&= 1 & \textnormal{\res{def:primes}}
	,\end{align*} 
	which, in turn, gives \res{apx:union}, all that remained to prove.
\end{proof}

\remStar The proof of the lower bound in the case $m = p^2$ for some prime $p$ could have appealed to \res{cor:lub:lower:product} rather than \ref{cor:product:union}, but the former has more conditions to check.

\section{Conclusion}
\label{sec:conclusion}

$PR$ is a distinct family of uniform finitely additive probabilities over $\Ints$.  From the strict inclusions
\begin{equation*}
WT \subset L \subset S \subset R \subset PR,
\end{equation*}
we have shown the last.  We have given necessary and sufficient conditions for there to exist a probability charge in $PR$ assigning $w$ to $\Primes$, namely $w \in [0, 1/2]$.  We have also given necessary and sufficient conditions for the existence of a probability charge in $PR$ assigning $x$ to a class modulo $c$, where $c$ is composite, namely $x \in [0, 1/y]$, where $y$ is the largest prime factor of $c$.


\begin{thebibliography}{1}
\providecommand{\url}[1]{{#1}}
\providecommand{\urlprefix}{URL }
\expandafter\ifx\csname urlstyle\endcsname\relax
  \providecommand{\doi}[1]{DOI \discretionary{}{}{}#1}\else
  \providecommand{\doi}{DOI \discretionary{}{}{}\begingroup
  \urlstyle{rm}\Url}\fi

\bibitem{kad}
J.B. Kadane, A.~O'Hagan, Using finitely additive probability: Uniform
  distributions on the natural numbers, Journal of the American Statistical
  Association \textbf{90}, 626 (1995)

\bibitem{sch}
O.~Schirokauer, J.B. Kadane, Uniform distributions on the natural numbers,
  Journal of Theoretical Probability \textbf{20}(3), 429 (2007)

\bibitem{van}
E.K. van Douwen, Finitely additive measures on $\mathbb{N}$, Topology and its
  Applications \textbf{47}(3), 223 (1992)

\bibitem{ker}
T.~Kerkvliet, R.~Meester, Uniquely determined uniform probability on the
  natural numbers, Journal of Theoretical Probability \textbf{29}(3), 797
  (2016)

\end{thebibliography}
\end{document}